\newtheorem{thm}{Theorem}[section]
\newcommand{\bt}{\begin{thm}}
\newcommand{\et}{\end{thm}}
\newtheorem{conj}[thm]{Conjecture}
\newtheorem{cor}[thm]{Corollary}   
\newcommand{\bc}{\begin{cor}}
\newcommand{\ec}{\end{cor}}
\newtheorem{lem}[thm]{Lemma}   
\newcommand{\bl}{\begin{lem}}
\newcommand{\el}{\end{lem}}
\newtheorem{prop}[thm]{Proposition}
\newcommand{\bp}{\begin{prop}}
\newcommand{\ep}{\end{prop}}
\newtheorem{defn}[thm]{Definition}
\newcommand{\bd}{\begin{defn}}    
\newcommand{\ed}{\end{defn}}
\newtheorem{rmrk}[thm]{Remark}   
\newcommand{\br}{\begin{rmrk}}
\newcommand{\er}{\end{rmrk}}
\newtheorem{example}[thm]{Example}
\newcommand{\Fto}{\stackrel {\mathcal{F}}{\longrightarrow} }
\newcommand{\mina}{\operatorname{MinA}}
\newcommand{\Scal}{\operatorname{Scalar}}
\newcommand{\be}{\begin{equation}}
 \newcommand{\ee}{\end{equation}}
\newcommand{\R}{\mathbb{R}}
\newcommand{\diam}{\operatorname{Diam}}
\newcommand{\mass}{{\mathbf M}}
\newcommand{\area}{\operatorname{Area}}
\newcommand{\vol}{\operatorname{Vol}}
\newcommand{\ds}{\displaystyle}
\begin{document}

\title[]{A Compactness Theorem for \\
Rotationally Symmetric Riemannian \\Manifolds with
Positive Scalar Curvature}


\author[]{Jiewon Park}
\address{Jiewon Park, 182 Memorial Drive, Cambridge, Massachusetts 02139}
\email{jiewon@mit.edu}

\author[]{Wenchuan Tian}
\address{Wenchuan Tian, 619 Red Cedar Road, C212 Wells Hall, East Lansing, Michigan 48824}
\email{tian.wenchuan@gmail.com}

\author[]{Changliang Wang}
\address{Changliang Wang, Max-Planck-Institut f\"ur Mathematik, Vivatsgasse 7, Bonn 53111, Germany}
\email{cwangmath@outlook.com}

\keywords{scalar curvature compactness, Sormani-Wenger intrinsic flat distance, rotationally symmetric manifolds}

\date{revised \today}

\begin{abstract}
Gromov and Sormani conjectured that sequences of compact Riemannian manifolds with
nonnegative scalar curvature and area of minimal surfaces bounded below should have subsequences which converge in the intrinsic flat sense to limit spaces which have nonnegative generalized scalar curvature and Euclidean tangent cones almost everywhere. In this paper we prove this conjecture for sequences of rotationally symmetric warped product manifolds. We show that the limit spaces have $H^1$ warping function that has nonnegative scalar curvature in a weak sense, and have Euclidean tangent cones almost everywhere.
\end{abstract}

\maketitle

\section{\bf Introduction}

In \cite{Gro14a} and \cite{Gro14b}, Gromov conjectured that the intrinsic flat convergence
may preserve a generalized notion of nonnegative scalar curvature. In light of this and examples
constructed by Basilio, Dodziuk, and Sormani in \cite{BDS17}, Gromov and Sormani proposed the following conjecture in \cite{GS18} (see also \cite{Sormani-scalar}).

\begin{conj}\label{Scalar-Compactness}
Let $\{M_j^3\}$ be a sequence of
 closed oriented manifolds without boundary satisfying
\be
 \vol(M_j) \le V,
\ee
\be
 \diam(M_j) \le D,
\ee
\be
\Scal_j \ge 0,
\ee
and
\be
 \mina(M_j) \ge A>0.
\ee
Here, $\mina(M_j)$ is defined as the infimum of areas of closed embedded minimal surfaces on $M_j$. Then a subsequence of $\{M_j\}$ converges in the volume preserving intrinsic flat sense to a limit space $M_\infty$,
\be
M_{j_k} \Fto M_\infty \ \ \textrm{ and } \ \ \mass(M_{j_k}) \to \mass(M_\infty).
\ee
Moreover, $M_\infty$ has nonnegative generalized scalar curvature, has Euclidean tangent cones almost everywhere, and satisfies the prism inequality.
\end{conj}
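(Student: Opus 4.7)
The plan is to attack the conjecture in four stages: (i) intrinsic flat precompactness, (ii) identification of the subsequential limit $M_\infty$ as a mass-preserving intrinsic flat limit, (iii) passage of the generalized nonnegative scalar curvature condition to $M_\infty$, and (iv) verification of the Euclidean tangent cone property and the prism inequality.

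For stage (i) I would apply Wenger's compactness theorem for integral current spaces. The bound $\diam(M_j)\le D$ and the volume bound $\vol(M_j)\le V$ supply the required diameter and mass bounds, and since each $M_j$ is closed, the boundary mass vanishes. Wenger's theorem then extracts an intrinsic flat subsequential limit $M_\infty$. The subtle point is to rule out mass loss; the Basilio--Dodziuk--Sormani examples show this occurs precisely when $\mina$ degenerates. To exclude it under $\mina(M_j)\ge A>0$, I would aim for uniform sliced filling volume estimates of the form $\mathbf{SF}_\varepsilon(M_j)\le C(V,D,A,\varepsilon)$, using the Schoen--Yau/Meeks--Simon--Yau existence theory for stable minimal surfaces in three-manifolds with nonnegative scalar curvature to show that any incipient thin neck would produce a closed embedded minimal surface of area below $A$, contradicting the hypothesis. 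Stage (ii) would then follow from Sormani's mass preservation criterion: if $\mathbf{SF}_\varepsilon \to 0$ uniformly, then $\mass(M_{j_k}) \to \mass(M_\infty)$ and $M_\infty$ is a non-degenerate integral current space.

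Stage (iii) is the heart of the conjecture and where I expect the principal obstacle. Several candidate notions of generalized nonnegative scalar curvature have been proposed: Gromov's polyhedral dihedral inequality, the prism inequality itself, tetrahedral comparison, and the existence of appropriate stable $\mu$-bubbles. My plan is to adopt the polyhedral formulation. For each small bi-Lipschitz map $\phi: Q \to M_j$ from a unit cube with nearly right dihedral angles, Gromov's dihedral inequality gives that the sum of angle defects along the edges is nonnegative, with defect controlled by an integral of $\Scal_j$. Combining this with a Huang--Lee--Sormani type good-set theorem, one hopes to obtain bi-Lipschitz limiting maps $\phi_\infty: Q \to M_\infty$ on sets of nearly full measure, along which the dihedral inequality passes to the limit, yielding a polyhedral definition of generalized nonnegative scalar curvature on $M_\infty$. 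The serious obstacle is that intrinsic flat convergence is far too weak to produce bi-Lipschitz control outside a small bad set without additional regularity input, and no Ricci-type lower bound is available; a genuine advance here seems to require a scalar-curvature $\varepsilon$-regularity theorem that currently does not exist in the literature.

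Stage (iv) would then proceed by a blow-up analysis: at almost every $p \in M_\infty$, rescalings $(M_\infty, \lambda\, \dist, p)$ intrinsic-flat converge to a tangent cone $C_p$ that inherits the generalized scalar curvature nonnegativity, and a rigidity statement for cones with generalized nonneg scalar curvature (itself to be proved) would force $C_p = \R^3$. The prism inequality for $M_\infty$ would be inherited from its validity on the $M_j$ by passing test prisms to the limit on the good sets constructed in stage (iii). Throughout, the absence of any Ricci lower bound means that every step must be driven by minimal surface theory and the $\mina$ hypothesis rather than by Cheeger--Colding machinery, which is what makes the conjecture genuinely hard. A reasonable first concrete target is the warped product case treated in this paper, where all of the above mechanisms can be made rigorous because the rotational symmetry reduces the PDE analysis to one variable; the present conjecture should be viewed as the global goal toward which such symmetric results accumulate evidence.
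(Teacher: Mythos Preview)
The statement you are attempting is Conjecture~\ref{Scalar-Compactness}, which the paper does \emph{not} prove. It is stated as an open problem; the paper establishes only the rotationally symmetric special case (Theorem~\ref{main-result}), and explicitly leaves the prism inequality open even there. So there is no ``paper's own proof'' of this statement to compare against, and your proposal is, appropriately, a research outline rather than a proof. You acknowledge this yourself in the final paragraph.

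That said, it is worth contrasting your proposed general strategy with what the paper actually does in the symmetric case, because the mechanisms are quite different. You propose Wenger compactness plus sliced-filling-volume estimates to rule out mass loss, a polyhedral/dihedral formulation of generalized scalar curvature passed through bi-Lipschitz good sets, and a blow-up rigidity argument for tangent cones. In the rotationally symmetric setting the paper bypasses all of this: the key observation (Lemma~\ref{newlem}) is that $\Scal_j\ge 0$ forces the warping functions to be uniformly $1$-Lipschitz, so Arzel\`a--Ascoli gives uniform convergence $f_j\to f_\infty$ directly. SWIF convergence then follows from the Lakzian--Sormani diffeomorphic-subregion estimate (Theorem~\ref{thm-subdiffeo}), mass preservation is immediate from uniform convergence of $f_j^2$, Euclidean tangent cones follow from a.e.\ differentiability of the Lipschitz limit $f_\infty$, and nonnegative generalized scalar curvature is taken in the distributional sense via $H^1_{loc}$ convergence. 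None of Wenger's theorem, filling-volume control, or dihedral comparison is needed.

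Your outline for the general case has the gaps you yourself flag: the $\varepsilon$-regularity input needed for stage (iii) does not exist, and without it the bi-Lipschitz control you need to pass dihedral inequalities to the limit is unavailable under mere intrinsic flat convergence. Stage (ii) is also not settled: the sliced-filling-volume criterion you invoke is not known to follow from $\mina\ge A$ and $\Scal\ge 0$ alone in general three-manifolds. These are exactly the obstacles that make the conjecture open.
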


The convergence in Conjecture $\ref{Scalar-Compactness}$ is under the Sormani-Wenger intrinsic flat (SWIF) distance between integral current spaces introduced by Sormani and Wenger in \cite{SW11}. In this paper, we will prove the first two parts of Conjecture $\ref{Scalar-Compactness}$ in the case when $M_{j}$ are rotationally symmetric Riemannian manifolds. Namely, we prove the convergence to a smooth manifold with $\mathcal{C}^0$ metric which has Euclidean tangent cones almost everywhere and nonnegative scalar curvature in the sense of distributions.

We briefly recall the notion of the intrinsic flat distance following \cite{Sormani-scalar}. An integral current space $(X, d, T )$ is a metric space $(X,d)$ with an integral current structure $T$. An oriented Riemannian manifold $(M^{m}, g)$ of finite volume can be naturally viewed as an integral current space, since it has a natural metric induced by the Riemannian metric $g$, and an integral current structure $T$ acting on differential $m$-forms $\omega$ as
\be
T(\omega)=\int_{M}\omega.
\ee

The mass of an integral current space $\mass(T)$ can be understood as a weighted volume. When the integral current space is an oriented Riemannian manifold, its mass is just its volume, $\mass(M) ={\rm Vol}(M)$. The boundary of an integral current space was defined by Ambrosio and Kirchheim in \cite{AK00} so that it satisfies Stokes' Theorem. In particular, when the integral current space is a Riemannian manifold $M$, then its boundary is just the usual boundary $\partial M$. We refer to \cite{AK00} for more details about integral current spaces.

Let $Z$ be a metric space and $T_{1}$ and $T_{2}$ be two $m$-integral currents on
$Z$.  Recall the flat distance between integral currents $T_{1}$ and $T_{2}$ defined by Federer and Fleming in \cite{FF60} is
\be
d^{Z}_{F}(T_{1}, T_{2})=\inf\bigg\{\mass(B^{m+1})+\mass(A^{m}) \mid T_{1}-T_{2}=A+\partial B\bigg\}.
\ee

\bd[\cite{SW11}]
The SWIF distance between integral current spaces $(X_{1}, d_{1}, T_{1})$, and $(X_{2}, d_{2}, T_{2})$ is defined as
\be
d_{\mathcal{F}}((X_{1}, d_{1}, T_{1}), (X_{2}, d_{2}, T_{2})) = \inf \bigg\{ d^{Z}_{F}(\varphi_{1\#}T_1, \varphi_{2\#}T_2) \mid \varphi_{i} : X_{i}\rightarrow Z\bigg\},
\ee
where the infimum is taken over all common complete metric spaces $Z$ and
all isometric embeddings $\varphi_{i} : X_{i}\rightarrow Z$, where $\varphi_{i\#}$ is the push-forward
map on integral currents.
\ed

We refer to \cite{SW11} for properties of the SWIF distances and only mention Wenger's Compactness Theorem \cite{Wen11}, which says that if a sequence of Riemannian manifolds $M_{j}$ satisfies
\be
\diam(M_j) \le D,
\ee
\be
\vol(M_j) \le V,
\ee
\be
\area(\partial M_j) \le A,
\ee
then there exists a subsequence $M_{j_i}$ such that $M_{j_i} \Fto M_\infty$, where $M_\infty$ is an integral current space, possibly the $0$ space. In Ambrosio and Kirchheim's work \cite{AK00} and Sormani and Wenger's work \cite{SW11}, it has been shown that $M_\infty$ can have tangent cones that are normed spaces. Note that there is no hypothesis on $\mina$ or scalar curvature in this compactness theorem. In \cite{AS18}, Allen and Sormani constructed rotationally symmetric examples with $\mina$ bound, but without scalar curvature bound, which have non-Euclidean tangent cones.

If $M_j$ has nonnegative scalar curvature, Gromov  proved in \cite{Gro14b} that if the limit space is smooth and the convergence is $\mathcal{C}^0$, then indeed the scalar curvature is nonnegative on the limit space.  In \cite{Bam16}, Bamler proved the same result using Ricci flow. In general, the volume is only lower semicontinuous; collapsing, or cancellation, can happen even with a scalar curvature bound, and the mass of the limit space can be 0. Such examples are given in Example \ref{collapsing} and by Sormani and Wenger in \cite{SW10}. Also note that the SWIF limit does not always coincide with the Gromov-Hausdorff limit; see Example \ref{lakzian}, which is an example constructed by Lakzian.

Now we consider rotationally symmetric Riemannian manifolds $(M^3_j, g_j)$, that is, $M^{3}_j$ are diffeomorphic to $\mathbb{S}^3$ with the metric tensor
\be
g_j = ds^2 + f_j(s)^2 g_{{\mathbb{S}}^2},
\ee
where $0\leq s\leq D_{j}$, and $f_j(s)$ is a smooth nonnegative function with $f_j(0) = f_j(D_{j})=0$ and $f_j>0$ everywhere else, and $f_j'(0) = 1$, $f_j'(D_{j}) = -1$, so that the metric tensor is smooth. Our main result confirms Conjecture $\ref{Scalar-Compactness}$ in this rotationally symmetric setting.
\bt\label{main-result}
Let $(M_j^3, g_{j})$ be a sequence of oriented rotationally symmetric Riemannian manifolds without boundary satisfying
\be
\diam(M_j) \le D,
\ee
\be
\Scal_j \ge 0,
\ee
\be
\mina(M_j) \ge A>0,
\ee
then a subsequence converges in the volume preserving intrinsic flat sense to a metric space $(M_\infty,g_\infty)$
\be
M_{j_k} \Fto M_\infty \ \ \textrm{ and } \ \ \mass(M_{j_k}) \to \mass(M_\infty).
\ee
The metric $g_{\infty}$ is rotationally symmetric, $\mathcal{C}^0$, $H^1$, and has nonnegative generalized scalar curvature, meaning that the warping function satisfies the inequality in Lemma $\ref{lem-scal}$ in the sense of distributions.
\et

\br\label{Scal-ge-6}
{\rm
In Theorem $\ref{main-result}$, when $\Scal_j\ge 0$ is replaced by $\Scal_j\ge H > 0$, we have the same convergence result and that $M_\infty$ has generalized scalar curvature at least $H$ in the sense of distributions.
}
\er

\br
{\rm
Note that in Theorem $\ref{main-result}$ we do not need to assume a uniform upper bound on volume as in Conjecture $\ref{Scalar-Compactness}$. Actually, with the help of Lemma $\ref{newlem}$, a uniform upper bound of volume follows from the non-negativity of scalar curvature and uniform upper bound of diameter. Lemma $\ref{newlem}$ also implies that the tangent cones are Euclidean almost everywhere on the limit space.
}
\er

\br
{\rm
In Section 3, we will illustrate that if $\mina(M_{j})$ has no positive lower bound then the sequence $M_{j}$ could collapse to the zero current. We will also recall an interesting example obtained by Lakzian in \cite{Lak16} to illustrate the difference between SWIF limit and Gromov-Hausdorff limit of sequences of rotationally symmetric Riemannian manifolds satisfying hypotheses in Theorem~\ref{main-result}.
}
\er

The SWIF convergence has been applied to study sequences of warped product type Riemannian manifolds with non-negative scalar curvature in various interesting problems, see Lee-Sormani \cite{LeeSormani1}, LeFloch-Sormani \cite{LeFloch-Sormani-1}, and Allen-Hernandez-Vazquez-Parise-Payne-Wang \cite{AHPPW18}. Especially, LeFloch and Sormani \cite{LeFloch-Sormani-1} proved a compactness theorem for Hawking mass in the rotationally symmetric setting. They proved that for a sequence of  three dimensional oriented Riemannian manifolds $M_j^3$ with boundary, with nonnegative scalar curvature and certain bounds including a bound on Hawking mass, a subsequence converges in the volume preserving SWIF distance to a limit space with nonnegative generalized scalar curvature in a generalized sense. This theorem is proved by showing $H^1_{loc}$
convergence of a subsequence of the manifolds with a well chosen
gauge and showing that the $H^1_{loc}$ limit coincides with a $\mathcal{F}$ limit
using Theorem~\ref{thm-subdiffeo}.
In general it is unknown whether $H^1_{loc}$ convergence implies
$\mathcal{F}$ convergence, but the monotonicity of the
Hawking mass allows for the implication in this setting. The limit space is a rotationally symmetric
manifold with a metric tensor $g \in H^1_{loc}$ and it is possible to define
generalized notions of nonnegative scalar curvature in a weak sense.

The organization of this paper is as follows. In section 2 we derive some basic consequences from the hypotheses on volume, diameter, scalar curvature, and $\mina$. In section 3 some interesting examples on the SWIF convergence are given to better illustrate the notion. In section 4 we prove uniform convergence of $f_j$ to a limit function $f_\infty$ and construct the limit space $M_\infty$ (Theorem \ref{limfun}). Then we show that $M_\infty$ has Euclidean tangent cones almost everywhere (Theorem \ref{tangcone}), and nonnegative generalized scalar curvature (Theorem \ref{gen-scalar-mid}). Here, we use the notion of distributional scalar cuvature, which is studied by LeFloch and Mardare in \cite{LM07}. Finally, in section 5 we prove the SWIF convergence of $M_j$ to $M_\infty$ after taking a subsequence (Theorem \ref{swif}). The proof relies on the technique of identifying large diffeomorphic regions on $M_j$ and $M_\infty$, introduced by Lakzian and Sormani \cite{Lakzian-Sormani}.

It remains an interesting open question to prove or disprove the prism inequality on the limit space. This question is so challenging even for smooth metric spaces that it was only recently settled by Li in \cite{Li17}.

{\bf Acknowledgements.}
This project began at the Summer School for Geometric Analysis as a part of Thematic Program on Geometric Analysis in July 2017 at Fields Institute for Research in Mathematical Science. The authors would like to thank Spyros Alexakis, Walter Craig, Robert Haslhofer, Spiro Karigiannis, and McKenzie Wang for organizing the program. The authors also thank Fields Institute for providing an excellent research environment. The authors would like to express their gratitude to Christina Sormani for giving an inspiring course on the subject in the summer school, grouping us together for the project, and her constant support and guidance through numerous helpful discussions. The authors are grateful to Brian Allen, Christian Ketterer, Chen-Yun Lin, and Raquel Perales for serving as teaching assistants during Sormani's course. The authors would like to thank Hanci Chi for many useful discussions. The authors participated in workshops to work on this project, which were funded by Sormani's NSF grant DMS-1309360 and DMS-1612049.

\section{\bf Basic Consequences of the Hypotheses}

In this section, we derive basic consequences from the hypotheses in Theorem $\ref{main-result}$.
Recall that $M_j$ is diffeomorphic to $\mathbb{S}^3$ and equipped with a smooth rotationally symmetric Riemannian metric $g_j = ds^2 + f_j(s)^2g_{s^2}$, where $0\leq s\leq D_{j}$, $f_j>0$ on $(0,D_{j})$, $f_j(0)=f_j(D_{j})=0$, $f_j'(0)=1$, and $f_j'(D_{j})=-1$, along with the bounds \be \diam (M_j) \leq D, \ee \be \Scal_j \geq 0,\ee and \be \mina(M_j) \geq A>0. \ee

\subsection{The Upper Bound on Volume}

\begin{lem} \label{lem-Vol}
$\vol(M_j)=4\pi\|f_j\|_{L^2([0, D_j])}^2$
\end{lem}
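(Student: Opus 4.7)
The plan is to compute the volume by integrating the volume form of the warped product metric in Fubini fashion over the interval parameter and the round sphere. First I would write down the volume form: for a warped product metric $g_j = ds^2 + f_j(s)^2 g_{\mathbb{S}^2}$ on $[0,D_j] \times \mathbb{S}^2$ (with the usual collapsing at the endpoints), the determinant of the metric in local coordinates factors as $\det(g_j) = f_j(s)^4 \det(g_{\mathbb{S}^2})$, so the volume element is $d\mathrm{vol}_{g_j} = f_j(s)^2 \, ds \wedge d\mathrm{vol}_{\mathbb{S}^2}$.

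Next I would integrate:
\begin{equation*}
\vol(M_j) = \int_0^{D_j} \int_{\mathbb{S}^2} f_j(s)^2 \, d\mathrm{vol}_{\mathbb{S}^2} \, ds = \left(\int_{\mathbb{S}^2} d\mathrm{vol}_{\mathbb{S}^2}\right) \int_0^{D_j} f_j(s)^2 \, ds = 4\pi \int_0^{D_j} f_j(s)^2 \, ds,
\end{equation*}
which by definition equals $4\pi \|f_j\|_{L^2([0,D_j])}^2$. The factoring of the integral uses that $f_j$ depends only on $s$ and that $\mathrm{Vol}(\mathbb{S}^2, g_{\mathbb{S}^2}) = 4\pi$.

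There is no real obstacle here; the only mild subtlety is handling the endpoints $s=0$ and $s=D_j$, where the polar-type coordinates degenerate. But since $f_j$ is smooth and vanishes at these endpoints, the set where the coordinate chart fails has measure zero, so the computation above gives the full volume of the smooth manifold $M_j$.
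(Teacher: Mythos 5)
Your proposal is correct and follows essentially the same computation as the paper, which simply writes $\vol(M_j)=\omega_2\int_0^{D_j}f_j^2(s)\,ds$ with $\omega_2=4\pi$ the area of the unit $\mathbb{S}^2$. Your extra remarks on the volume form of the warped product and the measure-zero degeneration at the poles are fine but not needed beyond what the paper states.
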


\begin{proof}
The volume is given by \be \nonumber \ds{\vol(M_j)=\omega_2 \int_0^{D_j} f_j^2(s) \ ds = \omega_2 \|f_j\|^2_{L^2([0, D_j])}},\ee where $\omega_2=4\pi$ is the volume of the unit sphere $\mathbb{S}^2$.
\end{proof}

\begin{lem} \label{lem-V-L2}
By extending $f_j$ as 0 on $[D_j, D]$, \be \nonumber \ds{||f_j||_{L_2([0, D])} \le \sqrt{\frac{V}{\omega_2}}},\ee and a subsequence of the $f_j$ converges to some $\ds{f\in L^2([0,D])}$ weakly.
\end{lem}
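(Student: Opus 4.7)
The plan is to combine the explicit volume formula in Lemma \ref{lem-Vol} with a uniform volume bound and the weak sequential compactness of bounded sets in the Hilbert space $L^2([0,D])$.

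First I would establish the uniform volume bound. Although Theorem \ref{main-result} does not include a direct upper bound on $\vol(M_j)$ in its hypotheses, Lemma \ref{newlem} (the lemma invoked in the third remark of the introduction) shows that the diameter bound together with nonnegativity of the scalar curvature gives a constant $V$, depending only on $D$, such that $\vol(M_j) \le V$. Inserting this into Lemma \ref{lem-Vol} yields
\[
\|f_j\|_{L^2([0,D_j])}^2 \;=\; \frac{\vol(M_j)}{\omega_2} \;\le\; \frac{V}{\omega_2}.
\]
Because extending $f_j$ by zero on $[D_j, D]$ does not change the $L^2$ norm, this is exactly $\|f_j\|_{L^2([0,D])} \le \sqrt{V/\omega_2}$, which proves the first assertion.

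For the second assertion, the zero-extended functions $f_j$ form a bounded sequence in the Hilbert space $L^2([0,D])$. Since $L^2([0,D])$ is separable and reflexive, the Banach-Alaoglu theorem (in its sequential form) provides a subsequence $f_{j_k}$ converging weakly to some $f \in L^2([0,D])$ with $\|f\|_{L^2([0,D])} \le \sqrt{V/\omega_2}$ by lower semicontinuity of the norm under weak convergence.

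There is essentially no serious obstacle in this lemma; the only point that requires care is that the volume bound $V$ is genuinely available from the hypotheses of Theorem \ref{main-result}, which in turn depends on Lemma \ref{newlem}. Once that is in hand, the $L^2$ bound is immediate from Lemma \ref{lem-Vol} and the vanishing of $f_j$ outside $[0,D_j]$, and the weak compactness step is standard Hilbert space theory.
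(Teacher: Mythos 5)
Your proposal is correct and follows essentially the same route as the paper: Lemma \ref{lem-Vol} converts the volume bound into the $L^2$ bound, the zero extension preserves the norm, and weak sequential compactness of bounded sets in the Hilbert space $L^2([0,D])$ gives the weakly convergent subsequence. The only difference is that you explicitly trace the constant $V$ back to Lemma \ref{newlem} (via $|f_j'|\le 1$, $f_j\le D$, hence $\vol(M_j)\le 4\pi D^3$), whereas the paper simply invokes $\vol(M_j)\le V$ in this proof and defers that justification to a remark; your version is, if anything, slightly more self-contained.
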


\begin{proof}
By Lemma $\ref{lem-Vol}$, $\vol(M_j)\leq V$ implies a uniform bound on the $L^2$ norm of $f_j$,
\be
\|f_j\|_{L^2([0,D])}=\left(\int_0^{D} f_j^2\right)^{1/2}=\left(\int_0^{D_j} f_j^2\right)^{1/2}\leq\sqrt{\frac{V}{\omega_2}},
\ee
and thus, a subsequence of $f_j$ converges to some $f\in L^2([0,D])$ weakly.
\end{proof}

\subsection{The Upper Bound on Diameter}

\begin{lem}\label{lem-diam}
$\diam(M_j)=D_{j}\le D.$
\end{lem}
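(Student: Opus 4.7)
The plan is to prove the stronger statement $\diam(M_j) = D_j$; the bound $D_j \le D$ then follows immediately from the diameter hypothesis in Theorem~\ref{main-result}. So the task reduces to two inequalities, one in each direction.

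For $\diam(M_j) \ge D_j$, the natural candidates are the two poles of $M_j$, corresponding to $s = 0$ and $s = D_j$. For any smooth curve $\gamma(t) = (s(t), \theta(t))$ in $M_j$, the $g_j$-length is
\[
\int \sqrt{s'(t)^2 + f_j(s(t))^2\,|\theta'(t)|_{\mathbb{S}^2}^2}\, dt \;\ge\; \int |s'(t)|\, dt,
\]
which in turn bounds the total variation of $s(t)$ from below. Applied to any curve joining the two poles, this shows the curve has length at least $D_j$, and hence the distance between the poles is at least $D_j$.

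For the reverse inequality, given arbitrary points $p = (s_1, \theta_1)$ and $q = (s_2, \theta_2)$, I would connect them by a piecewise-meridional curve routed through one of the poles. Going from $p$ along its meridian to $s = 0$ and then out along $q$'s meridian to $q$ produces a curve of length $s_1 + s_2$; analogously, routing through $s = D_j$ gives length $(D_j - s_1) + (D_j - s_2) = 2D_j - (s_1 + s_2)$. At least one of these two numbers is at most $D_j$, so $d_{g_j}(p,q) \le D_j$, which gives $\diam(M_j) \le D_j$.

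No serious obstacle is anticipated here; the one place to be mildly careful is that piecewise-meridional routing through a pole automatically sidesteps any potential issue with latitude spheres at $s$-values where $f_j(s)$ happens to be large, so the argument does not require any control of the warping function beyond its vanishing at the endpoints.
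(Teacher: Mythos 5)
Your proposal is correct and follows essentially the same route as the paper: the lower bound $d(N,S)\ge D_j$ comes from bounding any curve's length below by $\int|s'(t)|\,dt$, and the upper bound comes from routing through one of the two poles and observing that the two routes have lengths summing to $2D_j$, so one is at most $D_j$ (the paper phrases this via the triangle inequality with distances to the poles, which is the same computation). No gaps.
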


\begin{proof}
Let $d$ denote the distance function on $M_j$, and $N$, $S$ the north pole (corresponding to $s=0$) and south pole (corresponding to $s=D_j$)  respectively. First we check that $d(N, S)=D_j$. Fix $\theta\in\mathbb{S}^2$ and let $\gamma: [0, D_j]\rightarrow[0, D_j]\times M_j$ be a path defined as $\gamma(t)=(t, \theta)$. Then $\gamma$ is a path connecting $N$ and $S$, and has length $D_j$. Let $\delta: [a, b]\rightarrow [0, D_j]\times\mathbb{S}^2$, $\delta(t)=(s(t), \theta(t))$ be an arbitrary path connecting $N$ and $S$, that is, $\delta(a)=N$ and $\delta(b)=S$. Let $L_j(\delta)$ be the length of $\delta$. Then we have $L_j(\delta)\ge D_j$. Indeed,
\be
\begin{aligned}
L_j(\delta) &=\int^{b}_{a}\sqrt{|s^{\prime}(t)|^2+f^2_j(s(t))|\theta^{\prime}(t)|^2_{g_{\mathbb{S}^2}}} \ dt\\
            &\ge\int^{b}_{a}\left|s^{\prime}(t)\right| \ dt\\
            &\ge\left|\int^b_a s^{\prime}(t) \ dt\right|\\
            &=\left|s(b)-s(a)\right|=D_j.
\end{aligned}
\ee
Because $\delta$ is arbitrary, we obtain $d(N, S)=D_j$.

For any $p,q\in M_j$, let $d(p,N)=d_1$, $d(p,S)=d_2$, $d(q,N)=d_3$, and $d(q,S)=d_4$. Then $d_1+d_2 =d_3+d_4=d(N,S)=D_{j}$. We observe that $$d(p,q)\leq d(p,N)+d(N,q)=d_1+d_3,$$ and similarly, $d(p,q)\leq d_2+d_4$. Since $(d_1+d_3)+(d_2+d_4) = (d_1+d_2)+(d_3+d_4)=2D_{j}$, it follows that either $(d_1+d_3)$ or $(d_2+d_4)$ is at most $D_{j}$. Therefore, $d(p,q)\leq D_{j}$. Thus $\diam(M_j)=D_j$, and so $D_j\le D$, since $\diam(M_j)\le D$.

\end{proof}

\subsection{Scalar Curvature Bounded Below}

\begin{lem}\label{lem-scal}
$\ds{\Scal_j=-\frac{4 f_j ''}{f_j} + \frac{2(1-(f_j')^2)}{f_j^2}.}$
\end{lem}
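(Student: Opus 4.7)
The plan is to compute the scalar curvature directly from the warped product structure of the metric $g_j = ds^2 + f_j(s)^2 g_{\mathbb{S}^2}$. I would set up an orthonormal frame $\{e_0, e_1, e_2\}$ at each interior point $s \in (0, D_j)$ by taking $e_0 = \partial_s$ together with $e_1 = f_j^{-1} \tilde{e}_1$ and $e_2 = f_j^{-1} \tilde{e}_2$, where $\{\tilde e_1, \tilde e_2\}$ is a local orthonormal frame for the round $\mathbb{S}^2$. Since $g_j$ is a warped product of a one-dimensional base with the sphere, its sectional curvatures in these coordinate two-planes have a very clean form that I would derive via the Koszul formula for the Levi-Civita connection, then the $R(X,Y)Z$ formula.

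The key computation is to show that the two "radial" sectional curvatures agree,
\[
K(e_0, e_1) = K(e_0, e_2) = -\frac{f_j''}{f_j},
\]
while the "spherical" sectional curvature is given by
\[
K(e_1, e_2) = \frac{1 - (f_j')^2}{f_j^2}.
\]
The first identity follows from the standard fact that for a warped product base of dimension one, $\nabla_{\partial_s} V = (f_j'/f_j) V$ for $V$ horizontal to the spheres, from which the Jacobi-type equation yields the claimed formula. The second identity is the Gauss equation: the intrinsic Gauss curvature of the sphere $\{s\} \times \mathbb{S}^2$ scaled by $f_j^2$ is $1/f_j^2$, but one must subtract the square of the second fundamental form of the $s$-slice (a totally umbilic sphere with mean curvature $f_j'/f_j$), producing the $-(f_j')^2/f_j^2$ correction.

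Finally, because $\dim M_j = 3$, the scalar curvature is twice the sum of sectional curvatures over an orthonormal basis of two-planes:
\[
\Scal_j = 2\bigl( K(e_0,e_1) + K(e_0,e_2) + K(e_1,e_2) \bigr) = -\frac{4 f_j''}{f_j} + \frac{2(1-(f_j')^2)}{f_j^2},
\]
as claimed. I do not expect a serious obstacle here: the proof is a routine warped product calculation, and the only mild care needed is to restrict attention to the interior $s \in (0, D_j)$ where $f_j > 0$ so that the orthonormal frame is well-defined and the division by $f_j$ in the formulas is legitimate. Alternatively, one could simply invoke the standard scalar curvature formula for warped products $B \times_f F$, namely $\Scal = \Scal_B + f^{-2}\Scal_F - 2m \Delta_B f / f - m(m-1)|\nabla_B f|^2 / f^2$, and specialize to $B = \R$, $F = \mathbb{S}^2$, $m = 2$, $\Scal_F = 2$.
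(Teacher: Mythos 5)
Your proposal is correct and amounts to the same thing as the paper's proof: the paper simply quotes the general formula $\Scal = -2(n-1)f''/f + (n-1)(n-2)(1-(f')^2)/f^2$ for $ds^2 + f^2 g_{\mathbb{S}^{n-1}}$ (citing Petersen) and sets $n=3$, which is exactly the formula your sectional-curvature computation derives and which you also offer as your alternative route. The only nitpick is that the second fundamental form of the slice has \emph{principal} curvatures $f_j'/f_j$ (mean curvature $2f_j'/f_j$), but since the Gauss equation uses their product $(f_j'/f_j)^2$, your formula for $K(e_1,e_2)$ and the final result are unaffected.
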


\begin{proof}
The scalar curvature of the metric $ds^2+f(s)^2 g_{\mathbb{S}^{n-1}}$ is given by $$\Scal = -2(n-1)\frac{f_j ''}{f_j}+(n-1)(n-2)\frac{1-(f_j')^2}{f_j^2}$$ (see \cite{LeeSormani1} or \cite{Pet16}, section 3.2.3). In our case we have $n=3$.
\end{proof}

\begin{lem} \label{lem-hj}
$\Scal_j\ge 0$ is equivalent to
\be\label{hj}
h_j''\le \frac{3}{4}h_j^{-1/3},
\ee
where $h_j(s)=f_j^{3/2}(s)$.
\end{lem}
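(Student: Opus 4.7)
The proof is a direct computation converting the curvature inequality of Lemma~\ref{lem-scal} into a differential inequality for the substituted function $h_j = f_j^{3/2}$. My plan is to rewrite the scalar curvature formula in a form where the left-hand side is exactly a multiple of $h_j''$, using the chain rule on $f_j \mapsto f_j^{3/2}$.

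First I would start from the expression in Lemma~\ref{lem-scal} and note that, since $f_j>0$ on $(0,D_j)$, the inequality $\Scal_j \ge 0$ is equivalent (after multiplying by $f_j^2/2 > 0$) to the pointwise inequality
\be
2 f_j f_j'' + (f_j')^2 \le 1.
\ee
The left-hand side is the quantity I want to identify with $h_j''$ up to the correct factor.

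Next I would compute the derivatives of $h_j = f_j^{3/2}$: namely
\be
h_j' = \tfrac{3}{2} f_j^{1/2} f_j', \qquad h_j'' = \tfrac{3}{2} f_j^{1/2} f_j'' + \tfrac{3}{4} f_j^{-1/2}(f_j')^2.
\ee
Multiplying by $\frac{4}{3} f_j^{1/2}$ gives the clean identity
\be
\tfrac{4}{3} f_j^{1/2}\, h_j'' = 2 f_j f_j'' + (f_j')^2,
\ee
so the preceding inequality becomes $\frac{4}{3} f_j^{1/2}\, h_j'' \le 1$, equivalently $h_j'' \le \frac{3}{4} f_j^{-1/2}$.

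Finally, since $h_j = f_j^{3/2}$ yields $h_j^{1/3} = f_j^{1/2}$ and hence $h_j^{-1/3} = f_j^{-1/2}$, the inequality becomes $h_j'' \le \frac{3}{4} h_j^{-1/3}$, as claimed. Each step is reversible (we only multiplied by strictly positive quantities on $(0,D_j)$), so the two conditions are equivalent on the interior. There is no substantive obstacle here; the only thing to keep in mind is that the manipulations are carried out on $(0,D_j)$ where $f_j>0$, and the statement is to be interpreted at the endpoints by continuity.
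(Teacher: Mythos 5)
Your proof is correct and is essentially the same direct chain-rule computation as the paper's (the paper substitutes $f_j=h_j^{2/3}$ and expresses $f_j',f_j''$ in terms of $h_j$, while you go in the inverse direction, computing $h_j''$ in terms of $f_j$; the identity $\tfrac{4}{3}f_j^{1/2}h_j''=2f_jf_j''+(f_j')^2$ is the same content). Your observation that all manipulations multiply by strictly positive quantities on $(0,D_j)$, so the equivalence is genuine, is a correct and welcome point of care.
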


\begin{proof}
The lemma follows from substituting $f_j = h_j^{2/3}$. Then we have \be \nonumber f_j ' = (2/3)h_j^{-1/3} h_j''\ee and \be \nonumber f_j''=-(2/9)h_j^{-4/3}(h_j')^2+(2/3)h_j^{-1/3}h_j''. \ee
Substituting into Lemma \ref{lem-scal} and simplifying gives the desired result. \end{proof}

\subsection{Lipschitz Bound from Nonnegative Scalar Curvature}

\begin{lem} \label{newlem}
If $\Scal_j \geq 0$ then $|f_j'|\leq 1$ on $(0,D_{j})$.	
\end{lem}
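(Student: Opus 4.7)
The plan is to introduce the auxiliary function $E(s) = f_j(s)(1 - (f_j'(s))^2)$, which is (up to a factor of two) the Hawking mass of the $s$-sphere in $M_j$, and to show that $E(s) \geq 0$ on $[0, D_j]$. Since $f_j > 0$ on $(0, D_j)$, this inequality immediately yields $(f_j')^2 \leq 1$, which is the desired conclusion.

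First I compute $E'$. A direct differentiation gives $E'(s) = f_j'(s)\bigl[1 - (f_j'(s))^2 - 2 f_j(s) f_j''(s)\bigr]$. Solving the formula of Lemma~\ref{lem-scal} for $2 f_j f_j''$ shows that the bracketed factor equals $\tfrac{1}{2} f_j(s)^2 \Scal_j(s)$, so
\[
E'(s) \;=\; \tfrac{1}{2}\, f_j'(s)\, f_j(s)^2\, \Scal_j(s).
\]
Since $\Scal_j \geq 0$ and $f_j > 0$ on $(0, D_j)$, the sign of $E'$ is determined entirely by the sign of $f_j'$: on intervals where $f_j' \geq 0$, $E$ is non-decreasing, and on intervals where $f_j' \leq 0$, $E$ is non-increasing. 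The boundary conditions $f_j(0) = f_j(D_j) = 0$ give $E(0) = E(D_j) = 0$, while at any interior zero $s^* \in (0, D_j)$ of $f_j'$ one has $E(s^*) = f_j(s^*) > 0$.

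The result then follows from a partition argument on $[0, D_j]$. On the closed set $\{f_j' = 0\}$ we simply have $E = f_j \geq 0$. Its open complement in $(0, D_j)$ decomposes into countably many open intervals, on each of which $f_j'$ has constant sign (by continuity) and $E$ is therefore monotone. I distinguish three cases for such a component $I$ according to its endpoints. If $0$ is an endpoint of $I$, then $f_j' > 0$ on $I$ (because $f_j'(0) = 1$), $E$ is non-decreasing on $I$, and $E \geq E(0) = 0$. If $D_j$ is an endpoint of $I$, then analogously $f_j' < 0$ on $I$, $E$ is non-increasing, and $E \geq E(D_j) = 0$. Otherwise both endpoints of $I$ lie in $(0, D_j)$ and are zeros of $f_j'$, so by the previous paragraph $E$ is monotone between two strictly positive boundary values and is therefore strictly positive on $I$. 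In every case $E \geq 0$, which proves $|f_j'| \leq 1$ on $(0, D_j)$.

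I do not anticipate a serious obstacle. The only mildly delicate point is that the zero set of $f_j'$ could a priori be quite complicated, but the argument only uses continuity of $f_j'$, monotonicity of $E$ on each component of its complement, and the nonnegativity of $E$ at the zeros of $f_j'$, so no additional regularity is needed beyond smoothness of $f_j$.
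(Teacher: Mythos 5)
Your proof is correct, but it takes a genuinely different route from the paper's. The paper argues by contradiction at a ``first crossing'': assuming $f_j'(x_0)>1$, it locates the last point $x_1<x_0$ with $f_j'(x_1)=1$, applies the mean value theorem to produce $x_2\in(x_1,x_0)$ with $f_j''(x_2)>0$, and contradicts the pointwise consequence $f_j''\le (1-(f_j')^2)/(2f_j)<0$ of Lemma~\ref{lem-scal} at $x_2$. You instead prove the stronger global statement $E:=f_j\bigl(1-(f_j')^2\bigr)\ge 0$ via the identity $E'=\tfrac12 f_j'\,f_j^2\,\Scal_j$ (which I have checked against Lemma~\ref{lem-scal} and is correct), together with the sign analysis of $f_j'$ on the components of $\{f_j'\neq 0\}$ and the values $E(0)=E(D_j)=0$, $E(s^*)=f_j(s^*)>0$ at interior critical points. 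This is precisely the Geroch monotonicity of the Hawking mass $m_H=\tfrac12 f_j(1-(f_j')^2)$ of the coordinate spheres, specialized to rotational symmetry, and it ties the lemma directly to the LeFloch--Sormani framework mentioned in the introduction; it also strengthens immediately to a quantitative bound $1-(f_j')^2\ge \tfrac{H}{6}f_j^2$ on the increasing region when $\Scal_j\ge H>0$, which is relevant to Remark~\ref{Scal-ge-6}. The paper's argument is shorter and purely local. Two points in your write-up deserve a sentence each if you formalize it: (i) a component of $\{f_j'\neq 0\}$ cannot have both $0$ and $D_j$ as endpoints, since $f_j'$ would then have constant sign on $(0,D_j)$, contradicting $f_j'(0)=1$ and $f_j'(D_j)=-1$; and (ii) in the first two cases you compare $E$ on the open component with its boundary value at $0$ or $D_j$, which uses continuity of $E$ up to the endpoint --- this holds because $f_j$ is smooth on the closed interval, but it is worth saying.
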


\begin{proof}
Suppose to the contrary that $|f_j'(x_0)|>1$ for some $x_0 \in (0,D_j)$. We may assume that $f_j'(x_0)>1$. Let $x_1 = \inf \{y \in (0,x_0] \mid f_j'(x) > 1 \ \forall x\in (y,x_0]\}$. Then  $x_1<x_0$ and $f_j'(x_1) = 1$ by definition of $x_1$ (if $x_1 = 0$, we understand that $f_j'(0) = 1$ from smoothness of the metric). Since $f_j'$ is $\mathcal{C}^\infty$ on $(0,D_j)$ and continuous at 0, by the mean value theorem there exists $x_2 \in (x_1, x_0)$ such that $f_j''(x_2)(x_0-x_1)=f_j'(x_0) - f_j'(x_1)>0$, where the last inequality follows from that $f_j'(x_0)>1=f_j'(x_1)$. On the other hand, $x_1<x_2<x_0$ implies that $f_j'(x_2)>1$. Therefore $1-(f_j'(x_2))^2<0$. By Lemma \ref{lem-scal}, $\Scal_j\geq 0$ implies that \be \nonumber f_j''(x_2)(x_0-x_1) \leq \frac{1-(f_j'(x_2))^2}{2f_j(x_2)}(x_0-x_1) <0. \ee Hence we have that $0<f_j''(x_2)(x_0-x_1)<0$, a contradiction. Therefore $f_j'\leq 1$ on $(0,D_j)$. A similar argument gives that $f_j'\geq -1$.
\end{proof}

\subsection{The Minimum Area of Minimal Surfaces Bounded Below}

\begin{lem} \label{lem-min-surf}
If $f_j'(s_0)=0$, then $\{s=s_0\}$ is a minimal surface.
\end{lem}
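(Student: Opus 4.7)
The plan is to compute the mean curvature of the sphere $\Sigma_{s_0} := \{s = s_0\}$ directly from the warped product structure of $g_j = ds^2 + f_j(s)^2 g_{\mathbb{S}^2}$, and to show that it equals $2 f_j'(s_0)/f_j(s_0)$, which vanishes under the hypothesis.

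First I would note that for $s_0 \in (0, D_j)$ one has $f_j(s_0) > 0$, so $\Sigma_{s_0}$ is a smooth embedded $2$-sphere with unit normal $\nu = \partial_s$ and induced metric $f_j(s_0)^2 g_{\mathbb{S}^2}$. The central computation is the standard warped product identity: for any vector field $X$ tangent to $\Sigma_{s_0}$, the Levi-Civita connection of $g_j$ satisfies
\[
\nabla_X \partial_s = \frac{f_j'(s_0)}{f_j(s_0)}\, X.
\]
Thus the second fundamental form is $II(X,Y) = \tfrac{f_j'(s_0)}{f_j(s_0)}\, g_\Sigma(X,Y)$, and tracing with respect to the induced metric gives the mean curvature
\[
H_{s_0} = 2\,\frac{f_j'(s_0)}{f_j(s_0)}.
\]
Under the hypothesis $f_j'(s_0) = 0$ this vanishes, so $\Sigma_{s_0}$ is minimal.

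As a sanity check, one can obtain the same conclusion from the first variation of area. The area of $\Sigma_s$ is $A(s) = 4\pi f_j(s)^2$, hence $A'(s_0) = 8\pi f_j(s_0) f_j'(s_0) = 0$; since the variation $\partial_s$ has constant normal component $1$ along the rotationally symmetric surface $\Sigma_{s_0}$, the first variation formula $A'(s_0) = -\int_{\Sigma_{s_0}} H_{s_0}\, d\mathrm{vol}_\Sigma$ immediately forces $H_{s_0} = 0$. There is no real obstacle here; this is a classical identity for warped products, and the only point to be careful about is using the standard convention that ``minimal'' means zero mean curvature, consistent with the definition of $\mina$ used throughout the paper.
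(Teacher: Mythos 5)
Your proposal is correct and follows essentially the same route as the paper: the paper's proof likewise computes the mean curvature of the slice $\Sigma_{s}$ to be $H_j = 2f_j'(s)/f_j(s)$ and concludes that it vanishes exactly when $f_j'(s)=0$. Your additional justification via the warped-product connection identity and the first-variation sanity check merely fills in the standard computation the paper cites without detail.
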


\begin{proof}
Define $\Sigma_s$ as the level set of the coordinate function $s$. Then for all $s\in (0,D_j)$, $\Sigma_s$ is an embeded submanifold with mean curvature
\be H_j=\frac{2f'_j(s)}{f_j(s)}.\ee
Therefore $H_j(s)=0$ if and only if $\Sigma_s$ is minimal.
\end{proof}

\begin{defn}\label{defn-AjBj}
Define $0 < A_j \le B_j < D_j$ as
\be
A_j=\sup\{s \mid \, f_j \textrm{ is increasing on }[0,s]\, \},
\ee
\be
B_j=\inf\{s\mid \, f_j \textrm{ is decreasing on }[s,D_j]\, \}.
\ee
\end{defn}

\begin{lem}\label{lem-AjBj} $f_j'(A_j)=0$ and $f_j'(B_j)=0$. Moreover, $4\pi f_j^2(A_j)\ge \mina(M_j)$ and $4\pi f_j^2(B_j)\ge \mina(M_j)$.
\end{lem}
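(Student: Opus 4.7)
The plan is to argue the two assertions separately, first deriving that $f_j'$ vanishes at $A_j$ and $B_j$ directly from their definitions, then invoking \lemref{lem-min-surf} to identify the level sets as minimal surfaces and read off their areas.

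First I would verify $A_j$ and $B_j$ are well-defined with $0 < A_j \le B_j < D_j$. Since $f_j'(0)=1$ and $f_j'$ is continuous, $f_j$ is strictly increasing on some initial interval $[0,\varepsilon]$, so the set in the definition of $A_j$ is nonempty; similarly $f_j'(D_j)=-1$ together with continuity gives $B_j < D_j$. The condition $A_j \le B_j$ is immediate from the fact that $f_j > 0$ on $(0,D_j)$ with vanishing endpoints forces any increasing piece to precede any decreasing piece.

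Next I would show $f_j'(A_j)=0$ by squeezing. On one hand, since $f_j$ is increasing on $[0,A_j]$ (by continuity and the definition of $A_j$ as a supremum), $f_j'(A_j)\ge 0$. On the other hand, if $f_j'(A_j) > 0$, continuity of $f_j'$ gives an interval $(A_j-\eta, A_j+\eta)\subset (0,D_j)$ on which $f_j' > 0$, so $f_j$ would be increasing on $[0, A_j+\eta/2]$, contradicting the definition of $A_j$ as the supremum. Hence $f_j'(A_j)=0$. The argument for $f_j'(B_j)=0$ is symmetric, using the infimum characterization and monotonicity on $[B_j, D_j]$.

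Finally, by \lemref{lem-min-surf}, the level sets $\Sigma_{A_j}$ and $\Sigma_{B_j}$ are embedded minimal surfaces in $M_j$. Since the induced metric on $\Sigma_{s}$ is $f_j(s)^2 g_{\mathbb{S}^2}$, its area equals $4\pi f_j^2(s)$. Applying this at $s=A_j$ and $s=B_j$ and using the definition of $\mina(M_j)$ as the infimum of areas over closed embedded minimal surfaces yields
\be
4\pi f_j^2(A_j) = \area(\Sigma_{A_j}) \ge \mina(M_j), \qquad 4\pi f_j^2(B_j) = \area(\Sigma_{B_j}) \ge \mina(M_j).
\ee
No serious obstacle is expected; the only subtlety is the boundary-supremum argument, which is handled by noting $f_j'$ is continuous on the open interval and remains continuous up to $0$ and $D_j$ by smoothness of the warping function.
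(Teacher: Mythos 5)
Your proposal is correct and follows essentially the same route as the paper: rule out $f_j'(A_j)\neq 0$ using continuity of $f_j'$ and the supremum/infimum characterizations, then apply Lemma \ref{lem-min-surf} together with the definition of $\mina(M_j)$ to get the area bounds. The extra verification that $0<A_j\le B_j<D_j$ is a harmless addition (the paper folds this into Definition \ref{defn-AjBj}).
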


\begin{proof}
Suppose $f'_j(A_j)>0$. Then $f_j'(s)>0$ for $s\in [A_j-\varepsilon,A_j+\varepsilon]$ for some $\varepsilon > 0 $. Hence $f_j$ is increasing on the interval $[0,A_j+\varepsilon]$, a contradiction. Similarly, $f_j'(A_j)<0$ cannot hold, which proves that $f'_j(A_j)=0$. By the same argument $f_j'(B_j)=0$. By Lemma $\ref{lem-min-surf}$ there is a minimal surface at $s=A_j$ and $s=B_j$, which have areas $4\pi f_j^2(A_j)$ and $4\pi f_j^2(B_j)$ respectively.
\end{proof}

\begin{lem}\label{lem-middle-minA}
If $A_j<B_j$, then $4\pi f_j^2(s) \ge \mina(M_j)$ for all $s\in [A_j,B_j]$.
\end{lem}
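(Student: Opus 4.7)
The plan is to reduce the claim to finding a single minimal slice inside $[A_j, B_j]$ where $f_j$ attains its minimum. Since $f_j$ is smooth and $[A_j, B_j]$ is compact, $f_j$ achieves its minimum on this interval at some point $s^{*} \in [A_j, B_j]$. I claim that $f_j'(s^{*}) = 0$. If $s^{*} \in (A_j, B_j)$ is an interior point, this follows from $f_j$ being $\mathcal{C}^\infty$ and $s^{*}$ being a local minimum. If instead $s^{*} = A_j$ or $s^{*} = B_j$, then $f_j'(s^{*}) = 0$ by Lemma \ref{lem-AjBj}. Thus in all cases $f_j'(s^{*}) = 0$.

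By Lemma \ref{lem-min-surf}, the slice $\{s = s^{*}\}$ is then a minimal surface in $M_j$, and its area equals $4\pi f_j^2(s^{*})$. By the definition of $\mina(M_j)$ as the infimum of areas of closed embedded minimal surfaces, we obtain
\begin{equation}
4\pi f_j^2(s^{*}) \;\ge\; \mina(M_j).
\end{equation}
Since $s^{*}$ was chosen to minimize $f_j$ on $[A_j, B_j]$ and $f_j \ge 0$, for every $s \in [A_j, B_j]$ we have $f_j(s) \ge f_j(s^{*}) \ge 0$, and hence $4\pi f_j^2(s) \ge 4\pi f_j^2(s^{*}) \ge \mina(M_j)$, as required.

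I do not anticipate a genuine obstacle here; the statement is essentially a consequence of the mean-value principle combined with Lemma \ref{lem-min-surf} and the already-established fact (Lemma \ref{lem-AjBj}) that the endpoints $A_j$ and $B_j$ are critical points of $f_j$. The only subtlety worth mentioning in the write-up is the case where the minimum is attained at a boundary point, which is handled uniformly by Lemma \ref{lem-AjBj}, so no separate analysis is needed.
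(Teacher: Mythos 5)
Your proof is correct and follows essentially the same route as the paper's: pick the point where $f_j$ attains its minimum on $[A_j,B_j]$, observe it is a critical point of $f_j$, apply Lemma \ref{lem-min-surf} to get a minimal slice there, and compare with $\mina(M_j)$. You are in fact slightly more careful than the paper in explicitly handling the case where the minimum occurs at an endpoint via Lemma \ref{lem-AjBj}, which the paper leaves implicit.
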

\begin{proof}
If $A_j<B_j$, then by continuity there exists $s_0\in [A_j,B_j]$ such that $f_j(s)\geq f_j(s_0)$ for all $s\in [A_j,B_j]$. By Lemma $\ref{lem-min-surf}$ there is a minimal surface at $s=s_0$. As a result, by definition of $\mina(M_j)$, we have
\be
4\pi f_j^2(s)\geq 4\pi f_j^2(s_0)\geq \mina(M_j)
\ee
for all $s\in [A_j,B_j]$.
\end{proof}

\begin{lem}\label{lem-Dj-bounds}
\be
\frac{A}{4\pi}\le D_j^2 \le D^2.
\ee
\end{lem}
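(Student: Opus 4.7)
The upper bound $D_j^2 \le D^2$ is immediate from Lemma~\ref{lem-diam}, which identifies $\diam(M_j)$ with $D_j$ and then uses the hypothesis $\diam(M_j)\le D$. So the content of the lemma lies in the lower bound $D_j^2 \ge A/(4\pi)$.

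The plan is to combine the Lipschitz bound from Lemma~\ref{newlem} with the minimal surface area estimate from Lemma~\ref{lem-AjBj}. Specifically, since $f_j(0)=0$ and $|f_j'|\le 1$ on $(0,D_j)$ by Lemma~\ref{newlem}, integration of $f_j'$ on $[0,A_j]$ yields $f_j(A_j) \le A_j$. On the other hand, Lemma~\ref{lem-AjBj} tells us that $\{s=A_j\}$ carries a minimal surface of area $4\pi f_j^2(A_j)\ge \mina(M_j)\ge A$, so $f_j(A_j)\ge \sqrt{A/(4\pi)}$. Chaining these two inequalities gives $A_j \ge \sqrt{A/(4\pi)}$, and since $A_j \le B_j < D_j$ by \defref{defn-AjBj}, we conclude $D_j \ge \sqrt{A/(4\pi)}$, i.e., $D_j^2 \ge A/(4\pi)$.

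There is no real obstacle here; the argument is a two-line combination of earlier lemmas. The only subtlety to watch is that the Lipschitz bound $|f_j'|\le 1$ from Lemma~\ref{newlem} is stated on the open interval $(0,D_j)$, but continuity of $f_j$ together with $f_j(0)=0$ still gives $f_j(A_j)=\int_0^{A_j} f_j'(s)\,ds \le A_j$, so nothing is lost at the endpoint. One could equivalently phrase the argument using $B_j$ and the interval $[B_j,D_j]$, where $|f_j'|\le 1$ and $f_j(D_j)=0$ give $f_j(B_j)\le D_j-B_j$, combined with $f_j(B_j)\ge \sqrt{A/(4\pi)}$; this yields the same bound and provides a symmetric sanity check.
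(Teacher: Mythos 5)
Your proof is correct and follows essentially the same route as the paper: the upper bound from Lemma~\ref{lem-diam}, and the lower bound from chaining $\sqrt{A/(4\pi)}\le f_j(A_j)\le A_j\le D_j$ via Lemma~\ref{lem-AjBj} and the Lipschitz bound of Lemma~\ref{newlem}. (In fact your write-up is cleaner: the paper's displayed chain ``$f_j(A_j)\le A_j\le D_j^2$'' contains an apparent typo where $D_j^2$ should read $D_j$.)
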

\begin{proof}
In Lemma~\ref{lem-diam} we have obtained $D_j\le D$. On the other hand, from Lemma~\ref{newlem} and the definition of $A_{j}$, we have $f_{j}(A_j)\le A_j\le D_j^2$. Combining this with Lemma~\ref{lem-AjBj} gives $\ds{\frac{A}{4\pi}\le D_j^2}$.
\end{proof}

\section{\bf Examples}
\begin{example}\label{collapsing}
{\rm We will construct a family of 3-dimensional smooth closed rotationally symmetric Riemannian manifolds $M_{j}$, which are isometric to three-spheres with the Riemannian metrics $g_{j}=ds^{2}+f^{2}_{j}(s)g_{\mathbb{S}^{2}},$
where $s\in[0, D_j]$, such that
\be\label{Ex1-1}
\Scal_{j} \leq 0, \quad \textit{\rm for all}\ \ j\in\mathbb{N},
\ee
\be\label{Ex1-2}
\mina(M_{j})\rightarrow0, \quad \textit{\rm as} \ \ j\rightarrow\infty,
\ee
and
\be\label{Ex1-3}
M_{j}\Fto {\bf 0}, \quad \textit{\rm as} \ \ j\rightarrow\infty,
\ee
where ${\bf 0}$ is the zero current, since $\vol(M_{j})\rightarrow0$ as $j\rightarrow\infty$.

Let $\phi_{j}$ be a sequence of smooth functions defined on $[0, D]$ satisfying:

 (a) $\phi_{j}(0)=1$, and $\phi_{j}(D)=-1$;

 (b) $\phi_{j}$ is monotone non-increasing; that is, $\phi^{\prime}_{j}\leq 0$;

 (c) $\phi_{j}$ is symmetric about the point $(D/2, 0)$; that is, $\phi_{j}(s)=-\phi_{j}(D-s)$ for all $s\in D$;

 (d) $\ds{\lim\limits_{j\rightarrow\infty}\int^{\frac{D}{2}}_{0}\phi_{j}(s) \ ds=0}$.

Define functions $f_{j}$ on $[0, D]$ as
\be
f_{j}(s):=\int^{s}_{0}\phi_{j}(t) \ dt.
\ee

For example, we can set $D=2$, and
\be\label{Ex1-4}
\phi_{j}(s)=(1-s)^{2j+1}, \quad \text{defined on} \ \ [0, 2].
\ee
Then $\ds{f_{j}=\frac{1}{2j+2}-\frac{(1-s)^{2j+2}}{2j+2}}$ on [0, 2].

From the above properties of $\phi_{j}$, we have

 (a$^{\prime}$) $f^{\prime}_{j}(0)=1$, and $f^{\prime}_{j}(D)=-1$;

 (b$^{\prime}$) $f^{\prime\prime}_{j}=\phi^{\prime}_{j}\leq0$, and $|f^{\prime}_{j}|=|\phi_{j}|\leq 1$;

 (c$^{\prime}$) $f_{j}\geq 0$ with $f_{j}(0)=f_{j}(D)=0$ and $f_{j}>0$ everywhere else;

 (d$^{\prime}$) $\max\limits_{s\in [0, D]}\{f_{j}(s)\}=f_{j}(D/2)\rightarrow 0$ as $j\rightarrow \infty$.

By (a$^{\prime}$) and (c$^{\prime}$) above, $g_{j}=ds^{2}+f^{2}_{j}(s)g_{\mathbb{S}^{2}}$ is a smooth Riemannian metric on $\mathbb{S}^3$.

By (b$^{\prime}$), $g_{j}$ have nonnegative scalar curvatures. Indeed, (b$^{\prime}$) implies
$$
2f_{j}(s)f^{\prime\prime}_{j}(s)\leq 0\leq 1-(f^{\prime}_{j}(s))^{2}
$$
for all $s\in[0, D]$. This further implies
$$
\Scal_{j}=-\frac{4f^{\prime\prime}_{j}}{f_{j}}+\frac{2(1-(f^{\prime}_{j})^{2})}{f^{2}_{j}}\geq0.
$$

Finally, by (d$^{\prime}$), we have
$$
\mina(M_{j})\leq\vol(\{D/2\}\times\mathbb{S}^{2})=4\pi f^{2}_{j}(D/2)\rightarrow0,
$$
and
$$
\vol(M_{j})=\int^{D}_{0}\int_{\mathbb{S}^{2}}f^{2}_{j}(s) \, d{\rm vol}_{g_{\mathbb{S}^{2}}}=4\pi\int^{D}_{0}f^{2}_{j}(s) \, ds\leq4\pi f_{j}(D/2)D\rightarrow0,
$$
as $j\rightarrow\infty$.

}
\end{example}

\begin{example}[Example 5.9 in \cite{Lak16}] \label{lakzian}
{\rm
In Example 5.9 in \cite{Lak16}, Lakzian has shown that there are metrics $g_{j}$ on the sphere $\mathbb{S}^{3}$ with positive scalar curvature such that the family of rotationally symmetric Riemannian manifolds $M_{j}=(\mathbb{S}^{3}, g_{j})$ has the SWIF limit round sphere $\mathbb{S}^{3}$, and the Gromov-Hausdorff limit $\mathbb{S}^{3}\sqcup[0, 1]$, the round sphere $\mathbb{S}^{3}$ with an interval of length $1$ attached to it. Actually, these $M_{j}$ satisfy all hypotheses in Theorem~\ref{main-result}. Lakzian has shown that $\Scal_{j}>0$ and $\diam(M_{j})\leq \pi+3$. Moreover, one can easily check that $\mina(M_{j})=4\pi$. Now we briefly recall Lakzian's examples. They are $\mathbb{S}^3$ with a spline of finite length and arbitrary small width attached to it, and have positive scalar curvature. For fixed $L$ (the length of the spline will be between $L$ and $L+2$) and $\delta<1$ ($\delta$ will be width of the spline), let $m_{H}(r)$ be an admissible Hawking mass function (which has to be smooth and increasing) that satisfies
\be
m_{H}(r)=\frac{r(1-\varepsilon^2)}{2}, \quad \text{\rm for} \ \ r\in [0, \delta^{3}],
\ee
and
\be
m_{H}(r)=\frac{r^3}{2}, \quad \text{\rm for} \ \ r\in[\delta, 1].
\ee
where $\varepsilon$ is chosen so that
\be
\delta^3\sqrt{\frac{1-\varepsilon^2}{\varepsilon^2}}=L.
\ee
Then define
\be
z^{\prime}(r)=\sqrt{\frac{2m_H(r)}{r-2m_H(r)}}.
\ee
Note that $z^{\prime}(r)$ depends on $\delta$. So it will be denoted by $z^{\prime}_\delta(r)$.

Define the rotationally symmetric metric $g_{\delta}$ on $\mathbb{S}^3=[0, \pi]\times\mathbb{S}^2$ to be
\be
(1+[z^{\prime}_{\delta}(\sin(\rho))]^2)\cos^2(\rho)d\rho^2+\sin^2(\rho)g_{\mathbb{S}^2} \quad \text{for}\ \ \rho\in[0, \pi/2],
\ee
and
\be
d\rho^2+\sin^2(\rho)g_{\mathbb{S}^2} \quad \text{for} \ \ \rho\in[\pi/2, \pi].
\ee
By doing a certain implicit change of variable the metric on the part of $\rho\in[0, \pi/2]$ can be written as  $\rho = ds^2+f^2_\delta(s)g_{\mathbb{S}^2}$. For $\delta_{j}>0$ with $\delta_{j}\rightarrow0$ as $j\rightarrow\infty$ and suitable choice of $L$, $M_j=(\mathbb{S}^3, g_{\delta_j})$ give the example. \rm For more details about this example we refer to \cite{Lak16}.
}

\end{example}

\begin{example}[Example 3.12 in \cite{AS18}] \label{non-euc}
{\rm
In Example 3.12 in \cite{AS18}, Allen and Sormani construct a sequence of warped product metrics on $\mathbb{S}^1\times \mathbb{S}^2$p where the warping functions converge to 1 on a dense set. However, the metrics converge in Gromov-Hausdorff and SWIF sense to a metric space which is not a Riemannian manifold. In fact, no local tangent cone on this limit is isometric to the Euclidean space. It is possible to construct warped product metrics on $\mathbb{S}^3$ by cutting $\mathbb{S}^1$ to get an interval and capping off with hemispheres. Then the tangent cones in the middle region are not Euclidean. For details we refer to \cite{AS18}.
}
\end{example}

\section{\bf Properties of the Limit Space}

In this section, we will define the limit space with the continuous limit metric, and show that it has Euclidean tangent cones almost everywhere. We will also show that the metric is $H^1$ and has nonnegative scalar curvature in the sense that it satisfies $(\ref{hj})$ as a distribution.

From now on, we extend the warping functions $f_j$ defined on $[0, D_j]$ to functions defined on $[0, D]$ by setting $f_j=0$ on $[D_j, D]$. Then $f_j$ are continuous on $[0, D]$ and smooth everywhere on $(0, D)$ except at $D_j$.

Take $0<A_j \le B_j<D$ as in Definition~\ref{defn-AjBj}.  There is a subsequence
such that $A_j \to A_\infty$ and $B_j \to B_\infty$ where
\be \label{defn-Ainfty-Binfty}
0\le A_\infty\le B_\infty\le D.
\ee
\begin{thm} \label{limfun}
A subsequence of $f_j$ converge uniformly to a Lipschitz function $f_\infty$ on $[0,D]$, which has Lipschitz constant 1 and satisfies the following properties.

({\rm i}) $f_\infty(0) = 0$ and $f_\infty$ is nondecreasing on $[0,A_\infty]$,

({\rm ii}) $f_\infty \geq \sqrt{A/4\pi}$ on $[A_\infty,B_\infty]$ if $A_{\infty}\neq B_{\infty}$,

({\rm iii}) $f_\infty(D) = 0$ and $f_\infty$ is nonincreasing on $[B_\infty,D]$.

\end{thm}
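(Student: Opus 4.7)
The plan is to obtain $f_\infty$ via the Arzelà–Ascoli theorem applied to the extended warping functions, and then read off properties (i)–(iii) by passing to the limit in the analogous pointwise properties of $f_j$.

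First I would record that the family $\{f_j\}$ is uniformly 1-Lipschitz on $[0,D]$. On $(0,D_j)$ this is exactly Lemma~\ref{newlem}, while on $[D_j,D]$ the function is identically $0$. For $s\in(0,D_j)$ and $t\in[D_j,D]$ one has
\[
|f_j(t)-f_j(s)| \;=\; f_j(s) \;=\; |f_j(s)-f_j(D_j)| \;\le\; |s-D_j| \;\le\; |t-s|,
\]
so the extended $f_j$ is 1-Lipschitz across the joining point as well. Combined with $f_j(0)=0$ this also gives the uniform bound $\|f_j\|_{C^0}\le D$.

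Next, having already arranged (before stating the theorem) that $A_j\to A_\infty$ and $B_j\to B_\infty$, I would apply Arzelà–Ascoli along this subsequence to extract a further subsequence (still denoted $f_j$) with $f_j\to f_\infty$ uniformly on $[0,D]$. Uniform limits of 1-Lipschitz functions are 1-Lipschitz, so $f_\infty$ is 1-Lipschitz.

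For property (i), $f_\infty(0)=\lim f_j(0)=0$ is immediate. For monotonicity, fix $0\le s<t\le A_\infty$ and any $\delta>0$ small. Since $A_j\to A_\infty$, for large $j$ we have $t-\delta<A_j$, and by Definition~\ref{defn-AjBj} the function $f_j$ is (strictly) increasing on $[0,A_j]$, giving $f_j(s)\le f_j(t-\delta)$. Letting $j\to\infty$ and then $\delta\to 0$ (using continuity of $f_\infty$) yields $f_\infty(s)\le f_\infty(t)$. Property (iii) is proved symmetrically with $B_j$, using that $f_j$ is decreasing on $[B_j,D_j]$ and identically $0$ on $[D_j,D]$.

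For property (ii), assume $A_\infty<B_\infty$, so that $A_j<B_j$ for large $j$. By Lemma~\ref{lem-middle-minA} and the hypothesis $\mina(M_j)\ge A$, for any fixed $s\in(A_\infty,B_\infty)$ we have $A_j<s<B_j$ eventually, hence $f_j(s)\ge \sqrt{A/(4\pi)}$. Passing to the uniform limit gives $f_\infty(s)\ge\sqrt{A/(4\pi)}$ on the open interval, and continuity of $f_\infty$ extends the inequality to the closed interval $[A_\infty,B_\infty]$.

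The only subtle point is the monotonicity transfer at the varying endpoints $A_j,B_j$, which is why I would work a small $\delta$ inside the interval and then use continuity of $f_\infty$ to close up; everything else is a direct Arzelà–Ascoli argument powered by the uniform Lipschitz bound coming from nonnegative scalar curvature.
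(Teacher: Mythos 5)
Your proof is correct and follows essentially the same route as the paper's: the uniform $1$-Lipschitz bound from Lemma~\ref{newlem} (including the explicit check across the joining point $s=D_j$), Arzel\`a--Ascoli, and then passing the monotonicity of $f_j$ and the lower bound from Lemma~\ref{lem-middle-minA} to the limit. The only difference is that you spell out the $\delta$-argument needed to transfer monotonicity across the moving endpoints $A_j,B_j$, a detail the paper leaves implicit.
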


\begin{proof}
By Lemma \ref{newlem} all functions $f_{j}$ are Lipschitz with Lipschitz constant 1 on the interval $[0, D]$. Indeed, take any $x<y\in[0, D]$. If $x, y\in[0, D_j]$, then Lemma \ref{newlem} implies $|f_j(x)-f_j(y)|\le|x-y|$. If $x, y\in[D_j, D]$, then $f_j(x)=f_j(y)=0$ so $|f_j(x)-f_j(y)|\le|x-y|$. Finally if $x\le D_j<y\le D$, then since $f_j(D_j)=f_j(y)=0$, we have
$$
\frac{|f_j(x)-f_j(y)|}{|x-y|}=\frac{|f(x)-f(D_j)|}{|x-y|}\le\frac{|f(x)-f(D_j)|}{|x-D_j|}\le1.
$$
By combining with Arzel\`{a}-Ascoli theorem we obtain the uniform convergence. (ii) is then immediate from Lemma \ref{lem-middle-minA}. (i) and (iii) follows from the monotonicity of $f_j$ on $\ds{[0,A_j]\cup [B_j,D]}$.
\end{proof}

\begin{lem}\label{ak-bk}
Given sufficiently large $k>0$, the set
\be
I_k=\left\{x \, \bigg| \,f_\infty(x) \ge \frac{1}{k}\right\}
\ee
is a connected interval, $I_k=[a_k,b_k]$.
\end{lem}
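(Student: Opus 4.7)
The plan is to exploit the ``unimodal'' structure of $f_\infty$ that comes from combining properties~(i)--(iii) of Theorem~\ref{limfun}: the function is nondecreasing on $[0,A_\infty]$, nonincreasing on $[B_\infty,D]$, and, for $k$ large, stays above $1/k$ on the middle region $[A_\infty,B_\infty]$. The upshot will be that $I_k$ is a concatenation of three subintervals that share their endpoints, hence itself an interval.

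First I would quantify ``sufficiently large'' by choosing $k$ with $1/k < \sqrt{A/(4\pi)}$. This threshold captures both cases $A_\infty<B_\infty$ and $A_\infty=B_\infty$ in a uniform way. When $A_\infty<B_\infty$, property~(ii) gives directly that $f_\infty \geq \sqrt{A/(4\pi)} > 1/k$ on the entire middle region, so $[A_\infty,B_\infty]\subseteq I_k$. When $A_\infty = B_\infty$, property~(ii) is vacuous, but Lemma~\ref{lem-AjBj} yields $f_j(A_j)\geq \sqrt{A/(4\pi)}$ and, passing to the limit using the uniform convergence $f_j\to f_\infty$ from Theorem~\ref{limfun} together with $A_j\to A_\infty$, I get $f_\infty(A_\infty)\geq \sqrt{A/(4\pi)} > 1/k$, so the (degenerate) middle region sits in $I_k$ as well.

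Next, on $[0,A_\infty]$ I would set $a_k := \inf\{x\in [0,A_\infty] : f_\infty(x)\geq 1/k\}$. Because $f_\infty$ is continuous and nondecreasing on this interval with $f_\infty(0)=0$ and $f_\infty(A_\infty)>1/k$, the set $\{x\in[0,A_\infty]:f_\infty(x)\geq 1/k\}$ is exactly $[a_k,A_\infty]$, with $f_\infty(a_k)=1/k$. Symmetrically, on $[B_\infty,D]$ I would set $b_k := \sup\{x\in [B_\infty,D] : f_\infty(x)\geq 1/k\}$, obtaining the piece $[B_\infty,b_k]$. Taking the union of the three pieces yields
\[
I_k = [a_k,A_\infty]\cup[A_\infty,B_\infty]\cup[B_\infty,b_k] = [a_k,b_k],
\]
which is the desired conclusion.

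I do not anticipate a substantial obstacle; the argument is essentially a matter of combining continuity with the piecewise monotonicity that Theorem~\ref{limfun} has already produced. The only small subtlety is ensuring that the middle region is nontrivially in $I_k$ in the degenerate case $A_\infty=B_\infty$, and this is the reason I would invoke Lemma~\ref{lem-AjBj} in the limit rather than relying solely on property~(ii).
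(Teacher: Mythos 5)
Your proof is correct and follows essentially the same route as the paper's: the $\mina$ bound keeps $f_\infty$ above $1/k$ on the middle region $[A_\infty,B_\infty]$ (in the degenerate case $A_\infty=B_\infty$ you justify $f_\infty(A_\infty)\ge\sqrt{A/4\pi}$ via Lemma~\ref{lem-AjBj} and uniform convergence, a point the paper's proof leaves implicit when it asks for $1/k\le f_\infty(A_\infty)$), and the monotonicity from Theorem~\ref{limfun} makes the superlevel set an interval on each flank. No gaps.
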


\begin{proof}
$I_k$ is closed since $f_\infty$ is continuous. If $A_\infty \neq B_\infty$, by Lemma \ref{lem-middle-minA}, $f_j > \sqrt{A/4\pi}$ on $[A_j, B_j]$ for all $j$. Since $A_j \to A_\infty$ and $B_j \to B_\infty$ as $j\to \infty$, if $j$ is large then $f_j > \sqrt{A/16\pi}$ on $[A_\infty, B_\infty]$. Take $k$ large enough so that \be \nonumber \frac{1}{k} \leq \min \left\{f_\infty(A_\infty), f_\infty(B_\infty), \sqrt{\frac{A}{16\pi}}\right\}. \ee If $A_{\infty}=B_{\infty}$, then take $k$ large enough so that $\frac{1}{k} \leq f_{\infty}(A_{\infty})$. Then by Theorem \ref{limfun}, $I_k$ is a connected interval containing $[A_\infty,B_\infty]$.
\end{proof}

Note that $\ds{f_\infty(a_k) = f_\infty(b_k) = \frac{1}{k}}$. We set
\be
a_\infty:=\sup\{s\mid\, f_\infty(t)=0 \textrm{ on } [0,s] \,\} \in [0, A_\infty]
\ee
and
\be
b_\infty:=\inf\{s\mid \, f_\infty(t)=0 \textrm{ on } [s,D] \,\} \in [B_\infty, D].
\ee

Then we immediately have the following lemma.

\begin{lem}\label{a-b}
Let $a= \inf\{a_k\mid \, k>0\}$ and $b=\sup\{b_k\mid \, k>0\}$ so that
\be
(a,b) = \bigcup_{k>0} I_k.
\ee
Then $(a,b)=\{x:\, f_\infty(x)>0\}$ so $a=a_\infty$ and $b=b_\infty$.
\end{lem}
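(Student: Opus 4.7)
The plan is to prove the three claims in sequence: first $\bigcup_{k>0} I_k = (a,b)$, then $\bigcup_{k>0} I_k = \{x : f_\infty(x) > 0\}$, and finally $a = a_\infty$ and $b = b_\infty$. All three amount to unpacking the definitions together with the Lipschitz continuity of $f_\infty$ from Theorem~\ref{limfun}.

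For the first equality, I would note that the sets $I_k = \{x : f_\infty(x) \ge 1/k\} = [a_k, b_k]$ are nested increasing in $k$, so $a_k$ is non-increasing and $b_k$ is non-decreasing, and they converge to their infimum $a$ and supremum $b$ respectively. Any $x \in (a,b)$ satisfies $a_k < x < b_k$ for $k$ large enough, hence lies in $\bigcup_k I_k$. Conversely $\bigcup_k I_k \subset [a,b]$, and to exclude the endpoints I would observe that $f_\infty(a_k) = 1/k$ (since $a_k$ is the left endpoint of the closed interval $I_k$ and $f_\infty$ is continuous), so passing to the limit $a_k \to a$ yields $f_\infty(a) = 0$, which forces $a \notin I_k$ for any $k$. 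The symmetric argument rules out $b$, giving $\bigcup_k I_k = (a,b)$.

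For the second equality, $x \in I_k$ for some $k$ directly gives $f_\infty(x) \ge 1/k > 0$; conversely, if $f_\infty(x) > 0$, then $x \in I_k$ for any $k$ with $1/k \le f_\infty(x)$. Combining the two equalities yields $(a,b) = \bigcup_k I_k = \{x : f_\infty(x) > 0\}$.

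Finally, the identifications $a = a_\infty$ and $b = b_\infty$ follow by comparing two characterizations of the set on which $f_\infty$ vanishes. From the preceding equalities, $f_\infty \equiv 0$ on $[0,a]$, so $a$ lies in the set $\{s : f_\infty \equiv 0 \text{ on } [0,s]\}$, giving $a \le a_\infty$. Conversely, continuity of $f_\infty$ and the definition of $a_\infty$ imply $f_\infty \equiv 0$ on $[0,a_\infty]$, so $[0,a_\infty]$ is disjoint from $\{f_\infty > 0\} = (a,b)$, forcing $a_\infty \le a$. The same reasoning on the right endpoint gives $b = b_\infty$. There is no substantial obstacle here; the lemma is essentially a bookkeeping exercise reconciling two ways of describing the support of $f_\infty$, with continuity doing all of the real work.
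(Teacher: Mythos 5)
Your argument is correct, and it fills in exactly the bookkeeping that the paper treats as immediate (it states the lemma with no proof beyond ``Then we immediately have...''); in particular, your endpoint exclusion via $f_\infty(a_k)=1/k$ and continuity is precisely the computation the paper records separately as Proposition~\ref{finfty-to-zero}. No gaps, and no meaningful divergence from the intended route.
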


\begin{prop} \label{finfty-to-zero}
$f_\infty(a) = f_\infty(b) =0$.
\end{prop}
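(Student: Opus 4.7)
The plan is to derive the proposition directly from the continuity of $f_\infty$ (established in Theorem~\ref{limfun}) together with the definitions of $a = a_\infty$ and $b = b_\infty$. There is essentially no analytic difficulty here; the content is simply that the supremum/infimum defining $a$ and $b$ is attained because the underlying set is closed.

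First, I would show that $a \in \{s : f_\infty \equiv 0 \text{ on } [0,s]\}$. By definition of the supremum, there exists a sequence $s_n \nearrow a$ such that $f_\infty \equiv 0$ on each $[0,s_n]$. Since these intervals are nested, $f_\infty \equiv 0$ on $[0,a) = \bigcup_n [0,s_n]$. Continuity of $f_\infty$ then forces $f_\infty(a) = \lim_{s \nearrow a} f_\infty(s) = 0$. A symmetric argument, taking a sequence $t_n \searrow b$ with $f_\infty \equiv 0$ on $[t_n, D]$, gives $f_\infty(b) = 0$.

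Alternatively, and perhaps more cleanly, I would invoke Lemma~\ref{a-b} directly: it identifies $(a,b)$ with the open set $\{x : f_\infty(x) > 0\}$. Since neither endpoint lies in the open interval $(a,b)$, we have $f_\infty(a) \le 0$ and $f_\infty(b) \le 0$. On the other hand, $f_\infty \ge 0$ on $[0,D]$ since it is the uniform limit of the nonnegative functions $f_j$. Combining these two observations yields $f_\infty(a) = f_\infty(b) = 0$.

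The only step worth flagging is to make sure that Lipschitz continuity of $f_\infty$ (from Theorem~\ref{limfun}) is used to pass from the pointwise vanishing on the approximating sequence to vanishing at the endpoint; no subtler estimate is needed, and I do not anticipate any obstacle beyond being explicit about this.
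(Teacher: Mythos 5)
Your proposal is correct and rests on the same elementary ingredient as the paper's proof, namely the continuity of $f_\infty$ combined with the characterization of $a=a_\infty$ and $b=b_\infty$ as endpoints of the positivity set; the paper simply takes the specific sequence $a_k\to a$ with $f_\infty(a_k)=\tfrac{1}{k}\to 0$, whereas you approach $a$ through the zero set (or invoke Lemma~\ref{a-b} together with $f_\infty\ge 0$). Either variant is a complete proof.
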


\begin{proof}
Since $f_\infty$ is continuous, \be \nonumber f_\infty(a) = \underset{k\to\infty}{\lim}f_\infty(a_k) = \underset{k\to\infty}{\lim}\frac{1}{k} =0.\ee Similarly $f_\infty(b) =0$.
\end{proof}

\begin{defn}\label{limitspace}
The limit space
\be
M_\infty=[a_\infty, b_\infty] \times {\mathbb{S}}^2
\ee is
a warped product Riemannian manifold, diffeomorphic with $\mathbb{S}^3$, with the continuous metric
tensor
\be
g_\infty=ds^2 + f_\infty^2(s) g_{{\mathbb{S}}^2}.
\ee
\end{defn}

\begin{thm}\label{tangcone}
The local tangent cones of $M_\infty$ are $\mathbb{E}^3$ almost everywhere.	
\end{thm}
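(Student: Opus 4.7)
The plan is to reduce the statement to a pointwise verification at each $(s_0,\theta_0)\in M_\infty$ with $s_0\in(a_\infty,b_\infty)$. By Lemma~\ref{a-b} we have $f_\infty>0$ precisely on $(a_\infty,b_\infty)$, so the metric $g_\infty$ collapses each of the fibers $\{s=a_\infty\}\times\mathbb{S}^2$ and $\{s=b_\infty\}\times\mathbb{S}^2$ in $M_\infty$ to a single point; these two ``poles'' together form the only set of singular points and have $3$-dimensional Hausdorff measure zero with respect to the length metric induced by $g_\infty$. Thus it suffices to show that the pointed Gromov--Hausdorff tangent cone at each regular point is isometric to $\mathbb{E}^3$.

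Fix a regular point $(s_0,\theta_0)$. I would introduce geodesic normal coordinates $(y^1,y^2)$ on $\mathbb{S}^2$ centered at $\theta_0$, so that $g_{\mathbb{S}^2}=g_{ij}(y)\,dy^i dy^j$ with $g_{ij}(0)=\delta_{ij}$, and work in the chart $(s,y^1,y^2)$ where
\begin{equation}
g_\infty=ds^2+f_\infty(s)^2\,g_{ij}(y)\,dy^i dy^j.
\end{equation}
For $r>0$ introduce rescaled coordinates $\tilde s=(s-s_0)/r$ and $\tilde y^i=y^i/r$; then
\begin{equation}
r^{-2}g_\infty=d\tilde s^2+f_\infty(s_0+r\tilde s)^2\,g_{ij}(r\tilde y)\,d\tilde y^i d\tilde y^j.
\end{equation}
Continuity of $f_\infty$ on $[0,D]$ (Theorem~\ref{limfun}) combined with $f_\infty(s_0)>0$, and continuity of $g_{ij}$ with $g_{ij}(0)=\delta_{ij}$, gives uniform convergence of the tensor $r^{-2}g_\infty$ on compact subsets of $\mathbb{R}^3$ to
\begin{equation}
g_0=d\tilde s^2+f_\infty(s_0)^2\sum_{i=1,2}(d\tilde y^i)^2,
\end{equation}
which after the linear change $\tilde y^i\mapsto f_\infty(s_0)\tilde y^i$ is the flat Euclidean metric on $\mathbb{R}^3$.

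Finally, uniform $C^0$ convergence of continuous Riemannian metric tensors on a fixed compact coordinate neighborhood implies uniform convergence of lengths of absolutely continuous curves, hence convergence of the induced length distances on bounded regions, which yields pointed Gromov--Hausdorff convergence of $(M_\infty,r^{-1}d_\infty,(s_0,\theta_0))$ to $(\mathbb{E}^3,d_{\mathbb{E}^3},0)$ as $r\to 0$. This is exactly the statement that the local tangent cone at $(s_0,\theta_0)$ is $\mathbb{E}^3$. The main obstacle is the passage from uniform convergence of the metric tensors to pointed GH convergence of the length metrics inside the scaled balls; this is standard for continuous Riemannian metrics but requires a brief argument showing that length-minimizing paths for $g_\infty$ near $(s_0,\theta_0)$ stay in the coordinate chart where the tensor convergence holds, and comparing $r^{-1}d_{g_\infty}$ with $d_{r^{-2}g_\infty}$ on balls of fixed Euclidean radius.
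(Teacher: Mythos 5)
Your proposal is correct, but it takes a genuinely different route from the paper's (very brief) argument. The paper invokes Rademacher's theorem: since $f_\infty$ is Lipschitz it is differentiable at almost every $s_p$, and at such a point it identifies the blow-up of $g_\infty$ with (the blow-up of) the warped product $ds^2+l(s)^2g_{\mathbb{S}^2}$, where $l$ is the linearization of $f_\infty$ at $s_p$. You instead observe that under the rescaling $\tilde s=(s-s_0)/r$, $\tilde y^i=y^i/r$ the tensor $r^{-2}g_\infty$ converges uniformly on compacta to the constant flat tensor $d\tilde s^2+f_\infty(s_0)^2\delta_{ij}\,d\tilde y^id\tilde y^j$, using only continuity of $f_\infty$ and $f_\infty(s_0)>0$; differentiability plays no role, and your ``almost everywhere'' comes from discarding the two poles (a measure-zero set where $f_\infty$ vanishes) rather than from Rademacher. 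This buys a strictly stronger conclusion --- Euclidean tangent cones at \emph{every} point of $\mathring{M}_\infty$ --- and is arguably more transparent: the linearized warped product $ds^2+l(s)^2g_{\mathbb{S}^2}$ appearing in the paper is itself flat only when $l'=\pm1$, so the paper's one-line identification must really be read as the blow-up of that warped product, which is exactly the computation you carry out explicitly. The one step you defer, upgrading local uniform $C^0$ convergence of the tensors to pointed Gromov--Hausdorff convergence of the rescaled length metrics, is standard and amounts to the same two-sided bound $(1+\varepsilon)^{-2}g_0\le r^{-2}g_\infty\le(1+\varepsilon)^2g_0$ that the paper exploits in Lemma~\ref{lem-h}; your flag about minimizers between points of a fixed rescaled ball staying inside the chart is handled by the uniform two-sided comparability of $r^{-2}g_\infty$ with the Euclidean metric there.
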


\begin{proof}
Since $f_\infty$ is Lipschitz, it follows that $f_\infty$ is differentiable almost everywhere on $[a_\infty, b_\infty]$; that is, the limit $a_p = \displaystyle{\lim_{s\to s_p} \frac{f_\infty(s_i)-f_\infty(s_p)}{s_i - s_p}}$ exists at almost every $p=(s_p,\theta_p)$. Let $l(s) = f(s_p) + a_p(s-s_p)$ be the linear function that best approximates $f(s)$ at $s=s_p$. Then the tangent cone of $M_\infty$ at $p$ is the warped product $ds^2+l(s)^2g_{\mathbb{S}^2}$, which is isometric to the Euclidean space.	
\end{proof}

\begin{rmrk}
{\rm
The scalar curvature control, which in turn gave Lipschitzness of $f_\infty$, is of crucial importance in this argument; compare with Example \ref{non-euc}.}
\end{rmrk}

\begin{thm} \label{H-1-loc}
The sequence $h_j=f_j^{3/2}$, after possibly passing to a subsequence, converges in $H^1_{loc}$ to $h_\infty \in H^1(I)$, where $I$ is the open interval $(a_\infty, b_\infty)$. Defining $f_\infty = h_\infty^{2/3}$, a subsequence of $f_j$ also converges in $H^1_{loc}$ to $f_\infty\in H^1(I)$.\end{thm}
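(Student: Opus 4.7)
The plan is to combine three ingredients: the uniform Lipschitz bound $|f_j'|\le 1$ from \lemref{newlem}, the uniform convergence $f_j\to f_\infty$ from \thmref{limfun}, and the differential inequality $h_j''\le\tfrac{3}{4}h_j^{-1/3}$ from \lemref{lem-hj}. First I would observe that $h_j=f_j^{3/2}$ is uniformly Lipschitz on $[0,D]$: since $f_j(0)=0$ and $|f_j'|\le 1$ force $f_j\le D$, we get $|h_j'|=\tfrac{3}{2}f_j^{1/2}|f_j'|\le\tfrac{3}{2}D^{1/2}$. The uniform convergence $f_j\to f_\infty$ immediately upgrades to uniform convergence $h_j\to h_\infty:=f_\infty^{3/2}$ on $[0,D]$, and because $h_\infty$ is a uniform limit of uniformly Lipschitz functions it is itself Lipschitz, so $h_\infty\in H^1(I)$ where $I=(a_\infty,b_\infty)$ is a bounded interval.

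The heart of the argument is to promote this uniform first-order bound to a one-sided second-order bound on compact subintervals of $I$. I would fix the nested exhaustion $[a_k,b_k]$ from \lemref{ak-bk} and \lemref{a-b}. On $[a_k,b_k]$ we have $h_\infty\ge k^{-3/2}$, so uniform convergence yields $h_j\ge\tfrac{1}{2}k^{-3/2}$ for all $j$ large. Furthermore, since $b_k<b_\infty\le\lim_{j}D_j$ (using \propref{finfty-to-zero} together with the fact that $f_j$ has been extended by $0$ on $[D_j,D]$), we have $[a_k,b_k]\subset(0,D_j)$ for large $j$, so the scalar curvature inequality holds classically and yields a uniform upper bound $h_j''\le M_k$ on $[a_k,b_k]$. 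Equivalently, $\phi_j(s):=h_j(s)-\tfrac{M_k}{2}s^2$ is concave on $[a_k,b_k]$ for all large $j$, i.e., the sequence $h_j$ is uniformly semiconcave there.

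The final step uses the classical fact that if a sequence of concave functions converges uniformly, then the derivatives converge at every point where the limit is differentiable. Applied to $\phi_j\to\phi_\infty:=h_\infty-\tfrac{M_k}{2}s^2$, which is concave as the uniform limit of concave functions, this gives $h_j'(s)\to h_\infty'(s)$ almost everywhere on $[a_k,b_k]$. Combined with the uniform $L^\infty$ bound $|h_j'|\le\tfrac{3}{2}D^{1/2}$, the dominated convergence theorem yields $h_j'\to h_\infty'$ in $L^2([a_k,b_k])$, hence $h_j\to h_\infty$ in $H^1([a_k,b_k])$. Taking a diagonal subsequence over $k$ then delivers $H^1_{loc}$ convergence on $I$.

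For the analogous statement about $f_j$, I would use that $u\mapsto u^{2/3}$ is $C^1$ on $[c,\infty)$ for each $c>0$, so on each $[a_k,b_k]$ the uniform lower bound on $h_j$, together with the chain-rule identity $f_j'=\tfrac{2}{3}h_j^{-1/3}h_j'$, transfers the $L^2$ convergence of $h_j'$ to $L^2$ convergence of $f_j'$, giving $f_j\to f_\infty$ in $H^1_{loc}(I)$; that $f_\infty\in H^1(I)$ is immediate from Lipschitz continuity on the bounded interval. The main obstacle is precisely the passage from weak to strong $L^2$ convergence of $h_j'$: weak compactness is automatic from the Lipschitz bound, but strong convergence requires the extra rigidity supplied by semiconcavity, which is exactly what the nonnegative scalar curvature hypothesis delivers through \lemref{lem-hj}.
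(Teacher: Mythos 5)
Your proposal is correct, and the key compactness step is genuinely different from the paper's. Both arguments hinge on the same two inputs: the uniform Lipschitz bound $|h_j'|\le \tfrac32 D^{1/2}$ and the one-sided bound $h_j''\le \tfrac34 h_j^{-1/3}\le M_k$ on $I_k=[a_k,b_k]$, obtained from Lemma \ref{lem-hj} together with the uniform lower bound on $h_j$ coming from uniform convergence (and your remark that $[a_k,b_k]\subset(0,D_j)$ for large $j$, so the inequality holds classically, is a point the paper leaves implicit). From there the paper converts the one-sided bound into a uniform total-variation bound on $h_j'$ via the identity $\int_{I_k}|h_j''| = 2\int_{\{h_j''\ge 0\}}h_j'' - (h_j'(b_k)-h_j'(a_k))$, invokes BV compactness (Theorem 5.5 of \cite{EG15}) to get $L^1(I_k)$ convergence of $h_j'$, and interpolates with the $L^\infty$ bound to reach $L^2$. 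You instead read the bound $h_j''\le M_k$ as uniform semiconcavity, i.e.\ concavity of $\phi_j = h_j - \tfrac{M_k}{2}s^2$, and apply the standard fact (Rockafellar's Theorem 25.7, say) that uniform convergence of concave functions forces pointwise convergence of derivatives wherever the limit is differentiable; dominated convergence then gives $L^2(I_k)$ convergence directly. Your route is slightly cleaner in one respect: since the a.e.\ limit of $h_j'$ is identified with $h_\infty'$ without extracting anything, the subsequence already fixed by Theorem \ref{limfun} converges in $H^1$ on every $I_k$, so the diagonal extraction you mention at the end (and which the paper also glosses over when applying BV compactness on each $I_k$) is not actually needed. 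The transfer to $f_j$ via the chain rule and the lower bound on $h_j$ is identical in both arguments.
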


\begin{proof}
First we will show that when $k$ is large enough, there is a uniform bound on the variation of $h_j'$, that is,
\be
\sup_j \left\|h_j'\right\|_{BV(I_k)}<\infty.
\ee
By definition of $h_j$ we have
\be
h_j'(s)=\frac{3}{2}f_j^{1/2}(s)f_j'(s),
\ee
for $s\in[0, D_j)$.
By Lemma $\ref{newlem}$ we have $|f_j'(s)|\leq 1$ for all $j$ and for all $s\in [0,D_j]$, hence we have $0\leq f_j(s)\leq D_j/2\le D/2$ for all $j$ and all $s\in [0,D_j]$. As a result, $|h_j'(s)|\le D/2$ for all $s\in[0, D_j)$. Recall that $f_j$ has been extended as $0$ on $[D_j, D]$, so $h_j=0$ on $[D_j, D]$, and $h_j'=0$ on $(D_j,D]$. Thus by the same argument as in proof of Theorem \ref{limfun}, $\{h_j\}$ is uniformly Lipschitz. Then by Arzel\`{a}-Ascoli we have that a subsequence of $h_j$ converges to some $h_\infty$ uniformly in $[0,D]$. The limit function $h_\infty$ is also Lipshitz, and $h_\infty=(f_\infty)^{3/2}$ (since we only need this pointwise). Since a Lipshitz function defined on an interval is actually $W^{1,\infty}$, we have $h_\infty\in W^{1,\infty}(I)$. Since for each large enough $j$, $h_j'$ is smooth on $I_k$, we have
\be
\begin{split}
\left\|h_j'\right\|_{BV(I_k)} &=\int_{I_k} \left|h_j''\right| \ ds\\
&=\int_{\{s\in I_k \mid h_j''(s)\geq 0\}} h_j'' \  ds-\int_{\{s\in I_k \mid h_j''(s)< 0\}} h_j'' \ ds.
\end{split}
\ee

Note that
\be
h_j''=\frac{3}{4}f_j^{-1/2}(s)(f_j'(s))^2+\frac{3}{2}f_j^{1/2}(s)f_j''(s).
\ee

Let $j$ be so large that
\be
\left|h_j(s)-h_\infty (s)\right|<\frac{1}{3}\left(\frac{1}{k}\right)^{3/2}
\ee
for all $s\in (a_k,b_k)$. Then by definition of $I_k$, we have
\be
\frac{2}{3}\left(\frac{1}{k}\right)^{3/2}\leq h_{\infty}(s)-\frac{1}{3}\left(\frac{1}{k}\right)^{3/2}\leq h_j(s)
\ee
for all $s\in (a_k,b_k)$. Moreover, by Lemma $\ref{lem-hj}$, we have
\be
h_j''(s)\leq\frac{3}{4}h_j(s)^{-1/3}\leq \frac{3}{4}\left(\frac{2}{3}\left(\frac{1}{k}\right)^{3/2}\right)^{-1/3}
\ee
for $j$ large enough and for all $s\in I_k$. As a result, we have when $j$ is large,
\be
\int_{\{s\in I_k \mid h_j''(s)\geq 0\}} h_j'' \ ds\leq \frac{3}{4}\left(\frac{2}{3}\left(\frac{1}{k}\right)^{3/2}\right)^{-1/3}(b_k-a_k).
\ee
Moreover, since
\be
\int_{\{s\in I_k \mid h_j''(s)\geq 0\}} h_j'' \ ds+\int_{\{s\in I_k \mid h_j''(s)< 0\}} h_j'' \ ds=\int_{a_k}^{b_k}h_j''(s)\ ds=h_j'(b_k)-h_j'(a_k),
\ee
we have
\be
\begin{split}
\left\|h_j'\right\|_{BV(I_k)}&=\int_{a_k}^{b_k}\left|h_j''\right| \ ds\\
&=2\int_{\{s\in I_k \ \mid \ h_j''(s)\geq 0\}} h_j'' \ ds+h_j'(a_k)-h_j'(b_k)\\
&\leq \frac{3}{2}\left(\frac{2}{3}\left(\frac{1}{k}\right)^{3/2}\right)^{-1/3}(b_k-a_k)+3\left(\frac{D}{2}\right)^{1/2},
\end{split}
\ee
for all $j$ large enough.

As a result, by Theorem 5.5 in \cite{EG15} we have that $h_j'$ converges to some $\phi$ in $L^1 (I_k) $ norm. It is easy to show that $\phi=h_\infty '$ in the weak sense by a density argument. Moreover, since $h_\infty\in W^{1,\infty}(I)$ and
\be
\sup_{j}\left\|h_j'\right\|_{L^\infty(I_k)}<\infty,
\ee
we have $h_j'\to h_\infty '$ in $L^2 (I_k)$ norm. Note that by the H\"{o}lder inequality,
\be
\int_{I_k} \left|h_j'-h_\infty '\right|^2\leq \left\|h_j'-h_\infty '\right\|_{L^1(I_k)} \left\|h_j'-h_\infty '\right\|_{L^\infty(I_k)}.\ee
As a result $h_j\to h_\infty$ in $H^1_{loc}(I_k)$ norm.

Now we turn to the convergence of $f_j$. First note that the function $f(\xi)=\xi^{2/3}$ is $C^1$ with $f'(\xi)$ bounded when $\xi\geq \varepsilon>0$ for some $\varepsilon\in \R$. By the chain rule for weak derivatives we know that the weak derivative of $f_{\infty}$ exists, and that
\be
f_{\infty}'=\frac{2}{3}h_\infty^{-1/3}h_\infty'.
\ee
Since $h_j\to h_\infty$ uniformly on $[0,D]$, we have $\ds{h_j\geq \frac{1}{2}\left(\frac{1}{k}\right)^{3/2}>0}$ on $I_k$ for large $j$. Therefore,
\be
|f_j'-f_\infty'|\leq \frac{2}{3}\left(\frac{1}{2}\left(\frac{1}{k}\right)^{3/2}\right)^{-1/3}|h_j'-h_\infty'|
\ee
for large $j$. Since $h_j\to h_\infty$ in $H^1_{loc}(I)$, it follows that $f_j\to f_\infty$ in $H^1_{loc}(I)$.

\end{proof}

\begin{thm} \label{gen-scalar-mid}
$g_\infty$ has nonnegative generalized scalar curvature on the interior $\mathring{M}_\infty = M_\infty\setminus(\{a_\infty\}\times \mathbb{S}^2\cup\{b_\infty\}\times\mathbb{S}^2)$ of $M_\infty$, in the sense that $f_\infty$ satisfies $(\ref{hj})$ as a distribution on $\mathring{M}_\infty$.
\end{thm}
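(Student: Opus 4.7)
The plan is to unwind the meaning of the distributional inequality, then establish it by integrating the pointwise inequality $(\ref{hj})$ for $h_j$ against a test function and passing to the limit using the uniform convergence already established. By rotational symmetry, it suffices to show that for every nonnegative $\varphi \in C_c^\infty((a_\infty, b_\infty))$,
\begin{equation}
\int_{a_\infty}^{b_\infty} h_\infty(s)\, \varphi''(s)\, ds \;\le\; \int_{a_\infty}^{b_\infty} \tfrac{3}{4}\, h_\infty(s)^{-1/3}\, \varphi(s)\, ds,
\end{equation}
which is the distributional form of $h_\infty'' \le \tfrac{3}{4} h_\infty^{-1/3}$ on $\mathring{M}_\infty$. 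For each $j$, the hypothesis $\Scal_j \geq 0$ combined with Lemma~\ref{lem-hj} gives the pointwise inequality $h_j'' \leq \tfrac{3}{4} h_j^{-1/3}$ on $(0, D_j)$, and integrating by parts twice against $\varphi$ (whose support lies in $(a_\infty, b_\infty) \subset (0, D_j)$ for $j$ large) yields
\begin{equation}
\int h_j(s)\, \varphi''(s)\, ds \;\le\; \int \tfrac{3}{4}\, h_j(s)^{-1/3}\, \varphi(s)\, ds.
\end{equation}

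The next step is to pass to the limit on both sides. For the left-hand side, the uniform convergence $h_j \to h_\infty$ on $[0,D]$ obtained in the proof of Theorem~\ref{H-1-loc} immediately gives $\int h_j \varphi'' \to \int h_\infty \varphi''$. For the right-hand side, I would use Lemma~\ref{ak-bk}: since $\varphi$ has compact support in the open interval $(a_\infty, b_\infty) = \bigcup_k I_k$, there exists some $k$ with $\spt \varphi \subset I_k = [a_k, b_k]$, on which $h_\infty \geq k^{-3/2}$. By uniform convergence, $h_j \geq \tfrac{1}{2} k^{-3/2}$ on $\spt \varphi$ for all sufficiently large $j$, so the functions $h_j^{-1/3}$ are uniformly bounded on $\spt \varphi$ and converge uniformly to $h_\infty^{-1/3}$ there (since $\xi \mapsto \xi^{-1/3}$ is Lipschitz on $[\tfrac{1}{2} k^{-3/2}, \infty)$). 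Hence $\int h_j^{-1/3} \varphi \to \int h_\infty^{-1/3} \varphi$, completing the passage to the limit.

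The routine part is the integration by parts, which is legitimate because $h_j$ is smooth on a neighborhood of $\spt \varphi$ for $j$ large (once $[a_k, b_k] \subset (0, D_j)$, which holds by $D_j \to$ some limit $\geq b_\infty > b_k$ from Lemma~\ref{lem-Dj-bounds} and the choice of $b_k$). The only subtlety, and the reason the conclusion is restricted to $\mathring{M}_\infty$ rather than all of $M_\infty$, is that $h_\infty^{-1/3}$ blows up at the endpoints $a_\infty$ and $b_\infty$ where $f_\infty$ vanishes, so the right-hand side is not a priori locally integrable near those points. By working with test functions compactly supported away from $\{a_\infty, b_\infty\}$ and invoking Lemma~\ref{ak-bk} to place $\spt \varphi$ inside some $I_k$ where $h_\infty$ is bounded below, this difficulty is avoided. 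I expect no further obstacle once these uniform lower bounds are in hand.
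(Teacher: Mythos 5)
Your proposal is correct, and it takes a genuinely different route from the paper. The paper works at the level of $f_j$ itself: starting from $\Scal_j\ge 0$ in the form $1+(f_j')^2\ge 2(f_jf_j')'$, it integrates by parts \emph{once} against $u$, and then must pass to the limit in the terms $\int (f_j')^2 u$ and $\int f_j'f_j\,u'$ --- which forces it to invoke the $H^1_{loc}$ convergence $f_j\to f_\infty$ of Theorem~\ref{H-1-loc}; it then lifts the resulting inequality for $f_\infty$ to $\mathring M_\infty$ by averaging a general test function over $\mathbb{S}^2$. You instead work with the substituted form $h_j''\le \tfrac34 h_j^{-1/3}$ of Lemma~\ref{lem-hj} and integrate by parts \emph{twice}, so that no derivative of $h_j$ survives on either side; the passage to the limit then needs only the uniform convergence $h_j\to h_\infty$ together with the lower bound $h_j\ge \tfrac12 k^{-3/2}$ on $I_k$ from Lemma~\ref{ak-bk}, and Theorem~\ref{H-1-loc} is not needed at all. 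This is a real simplification (the $H^1$ regularity remains of independent interest for the statement of Theorem~\ref{main-result}, but is not needed for the curvature inequality), and your version also proves literally the $h$-form of $(\ref{hj})$ asserted in the theorem statement, whereas the paper proves the equivalent $f$-form. Your handling of the two genuine technical points --- that $\spt\varphi\subset I_k\subset(0,D_j)$ so $h_j$ is smooth and bounded below there for large $j$, and that the singularity of $h_\infty^{-1/3}$ at $a_\infty,b_\infty$ is avoided by restricting to test functions supported in $\mathring M_\infty$ --- is sound. The only thing stated telegraphically is the reduction ``by rotational symmetry''; to match the theorem's phrasing on $\mathring M_\infty$ you should, as the paper does, integrate a general nonnegative $\tilde v\in C_c^\infty(\mathring M_\infty)$ over $\mathbb{S}^2$ to produce the one-dimensional test function, but this is routine.
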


\begin{proof}
Fix a large $k$. For $j$ large enough, and for any $u\in C^\infty_c(I_k)$ such that $u\geq 0$, by Lemma $\ref{lem-scal}$ after some calculation we have
\be
\int_{I_k} (1+(f_j')^2)u\ ds\geq 2\int_{I_k}(f_j'f_j)'u\ ds.
\ee
After integration by part on the right hand side, we get
\be
\int_{I_k} (1+(f_j')^2)u\ ds\geq -2\int_{I_k}(f_j'f_j)u'\ ds.
\ee
Since
\be
\begin{split}
\left|\int_{I_k}\left(\left(f_j'\right)^2-f_\infty'^2\right)u\ ds\right| &\leq \left\|u\right\|_{L^\infty(I_k)}\cdot \left|\left\|f_j'\right\|^2_{L^2(I_k)}-\left\|f_\infty'\right\|^2_{L^2(I_k)}\right|\\
&\leq \left\|u\right\|_{L^\infty(I_k)}\cdot\left(\left\|f_j'\right\|_{L^2(I_k)}+\left\|f_\infty'\right\|_{L^2(I_k)}\right)\cdot \left|\left\|f_j'\right\|_{L^2(I_k)}-\left\|f_\infty'\right\|_{L^2(I_k)}\right|
\\
&\leq \left\|u\right\|_{L^\infty(I_k)} \cdot \left(\left\|f_j'\right\|_{L^2(I_k)}+\left\|f_\infty'\right\|_{L^2(I_k)}\right)\cdot\left\|f_j'-f_\infty'\right\|_{L^2(I_k)},
\end{split}
\ee
and

\be
\begin{split}
\left|\int_{I_k}\left(f_j' f_j-f_\infty' f_\infty\right) u'\ ds\right| &\leq \int_{I_k}\left|f_j'f_j-f_j' f_\infty\right| \cdot \left|u'\right|\ ds+\int_{I_k}\left|f_j'f_\infty-f_\infty' f_\infty\right| \cdot \left|u'\right|\ ds\\
&\leq \left\|f_j'\right\|_{L^2(I_k)} \cdot \left\|f_j-f_\infty\right\|_{L^2(I_k)} \cdot \left\|u'\right\|_{L^\infty(I_k)}+ \left\|f_\infty\right\|_{L^2(I_k)} \cdot \left\|f_j'-f_\infty'\right\|_{L^2(I_k)} \cdot \left\|u'\right\|_{L^\infty(I_k)},
\end{split}
\ee
by Theorem \ref{H-1-loc} and density, we have
\be
\int_{I_k} \left(1+\left(f_\infty'\right)^2\right)u\ ds\geq -2\int_{I_k}\left(f_\infty'f_\infty\right)u'\ ds.
\ee

Which means for any $u\in C_c^\infty (I)$ with $u\geq 0$, we have

\be
\int_{I} \left(1+\left(f_\infty'\right)^2\right)u\ ds\geq 2\int_{I}\left(f_\infty'f_\infty\right)'u\ ds,
\ee
where we think of $(f_\infty' f_\infty)'$ as a distribution on $I$. Define $\tilde{u}$ on $\mathring{M}_\infty$ by
\be
\tilde{u}(s,\theta):=u(s).
\ee
Then by the previous argument we have
\be
\int_{\mathring{M}_\infty} \Scal_\infty \tilde{u}\ d \text{vol}_\infty\geq 0
\ee
in the sense of distribution. Here $\Scal_\infty=\frac{-4 (f_\infty 'f_\infty)'+2(1+(f_\infty')^2)}{f_\infty^2}$ is viewed as a distribution on $\mathring{M}_\infty$, and $d\text{vol}_\infty=f_{\infty}^2 ds d\text{vol}_{{\mathbb{S}}^2}$ is the volume form on $\mathring{M}_\infty$.

For a general $\tilde{v}\in C_c^\infty (\mathring{M}_{\infty})$ with $\tilde{v}\ge 0$, define
\be
v(s):=\int_{\mathbb{S}^2}\tilde{v}(s,\theta)\ d\theta.
\ee
Since $\mathbb{S}^2$ is compact, differentiation by $s$ commutes with integration. As a result, $v\in C_c^\infty (I)$. By the previous argument we have
\be
\int_{\mathring{M}_\infty} \Scal_\infty \tilde{v}\ d \text{vol}_\infty \geq 0
\ee
in the sense of distribution.
\end{proof}

\br
{\rm
When $\Scal_j\ge 0$ is replaced by $\Scal_j\ge H>0$, as mentioned in Remark $\ref{Scal-ge-6}$, we can still use $\Scal_j\ge H>0$ to get uniform convergence to a Lipshitz function (as in Lemma $\ref{newlem}$ anad Theorem $\ref{limfun}$) and $H^1_{loc}$ convergence (as in Theorem $\ref{H-1-loc}$). Then we can use a similar argument as in Theorem $\ref{gen-scalar-mid}$ to show $\Scal_\infty\ge H>0$ in the sense of distribution.
}
\er

In Theorem \ref{gen-scalar-mid} we have obtained nonnegativity of scalar curvature of $g_\infty$ restricted on $\mathring{M}_\infty$ in the sense of distributions. Now we consider generalized scalar curvature in the sense of small volumes at the two poles of $M_\infty$, which are $p_{a_\infty}=(a_\infty, \theta)$ and $p_{b_\infty}=(b_\infty,\theta)$. Recall that on a 3-dimensional smooth Riemannian manifold $(M, g)$ the scalar curvature at a point $p\in M$ can be expressed as
\be
\Scal_g(p)=30\cdot\lim_{r\rightarrow0}\frac{\frac{4\pi}{3}r^3 - \vol(B(p, r))}{\frac{4\pi}{3}r^5},
\ee
where $B(p, r)$ is the ball in $M$ centered at $p$ of radius $r$. Thus we will show that $g_\infty$ has nonnegative generalized scalar curvature at points $p_{a_\infty}$ and $p_{b_\infty}$ in the sense of satisfying the following inequalities.

\begin{prop} \label{voltip}
The limit metric $g_\infty$ stasties
\be\label{tip1}
\liminf_{r\rightarrow0}\frac{\frac{4\pi}{3}r^3 - \vol(B(p_{a_\infty}, r))}{\frac{4\pi}{3}r^5}\ge0,
\ee
and 
\be\label{tip2}
\liminf_{r\rightarrow0}\frac{\frac{4\pi}{3}r^3 - \vol(B(p_{b_\infty}, r))}{\frac{4\pi}{3}r^5}\ge0.
\ee
\end{prop}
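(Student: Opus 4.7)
The plan is to exploit the Lipschitz bound $|f_\infty'|\le 1$ (from Theorem~\ref{limfun}, via Lemma~\ref{newlem}) together with the boundary value $f_\infty(a_\infty)=0$ (Proposition~\ref{finfty-to-zero}) to show that the numerator $\frac{4\pi}{3}r^3-\vol(B(p_{a_\infty},r))$ is in fact nonnegative for every sufficiently small $r>0$. Once that is established the $\liminf$ inequality in $(\ref{tip1})$ is trivial, since each term of the ratio is nonnegative. The same argument, with the roles of the endpoints swapped, will give $(\ref{tip2})$ at $p_{b_\infty}$.

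First I would identify the metric ball $B(p_{a_\infty},r)$ in the warped product $(M_\infty,g_\infty)$. For any piecewise smooth curve $\gamma(t)=(s(t),\theta(t))$ joining $p_{a_\infty}$ to a point $(s_0,\theta_0)$ with $s_0>a_\infty$, the length estimate
$$L_{g_\infty}(\gamma) = \int \sqrt{\dot s(t)^2 + f_\infty(s(t))^2\, |\dot\theta(t)|^2_{g_{\mathbb{S}^2}}}\,dt \ge \int |\dot s(t)|\,dt \ge s_0-a_\infty$$
holds, and equality is realized by the meridian $t\mapsto(a_\infty+t(s_0-a_\infty),\theta_0)$. Hence $d_\infty(p_{a_\infty},(s_0,\theta_0))=s_0-a_\infty$, so the ball $B(p_{a_\infty},r)$ corresponds to the coordinate slab $[a_\infty,a_\infty+r]\times\mathbb{S}^2$ for $r$ small enough that $a_\infty+r\le b_\infty$.

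Next I would bound the volume via the warped product volume form and the Lipschitz estimate. Since Theorem~\ref{limfun} gives that $f_\infty$ has Lipschitz constant $1$, combined with $f_\infty(a_\infty)=0$ we obtain $f_\infty(s)\le s-a_\infty$ for all $s\in[a_\infty,a_\infty+r]$. Therefore
$$\vol(B(p_{a_\infty},r)) = 4\pi\int_{a_\infty}^{a_\infty+r} f_\infty(s)^2\,ds \le 4\pi\int_{a_\infty}^{a_\infty+r}(s-a_\infty)^2\,ds = \frac{4\pi}{3}r^3,$$
which yields $\frac{4\pi}{3}r^3-\vol(B(p_{a_\infty},r))\ge 0$ for all sufficiently small $r>0$, proving $(\ref{tip1})$. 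The argument at $p_{b_\infty}$ is identical: one uses $f_\infty(b_\infty)=0$ and $f_\infty(s)\le b_\infty-s$ on $[b_\infty-r,b_\infty]$ to obtain $(\ref{tip2})$. There is no genuine obstacle here; the proposition is essentially a direct manifestation of the fact that the nonnegative scalar curvature hypothesis propagates, through the $C^0$ limit, to the Lipschitz constant $1$ property of the warping function $f_\infty$ at the two poles, which is exactly the infinitesimal Euclidean condition.
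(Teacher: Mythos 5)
Your proof is correct and follows essentially the same route as the paper: identify $B(p_{a_\infty},r)$ with the coordinate slab, write the volume as $4\pi\int_{a_\infty}^{a_\infty+r}f_\infty^2\,ds$, and reduce everything to the pointwise bound $f_\infty(s)\le s-a_\infty$. The only difference is that you obtain this bound directly from the Lipschitz constant $1$ of $f_\infty$ stated in Theorem~\ref{limfun} together with $f_\infty(a_\infty)=0$, whereas the paper re-derives it by a contradiction argument passing through the approximating $f_j$ and Lemma~\ref{newlem}; your justification is the more economical of the two.
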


\begin{proof}
We will prove the inequality (\ref{tip1}). Using polar coordinates, 
\be
\vol(B(p_{a_\infty}, r)) = \int_a^{r+a} f_\infty(s)^2 \text{Area}(\mathbb{S}^2) \, ds = 4\pi \int_a^{r+a} f_\infty^2(s)ds.
\ee
Therefore, to prove (\ref{tip1}) it suffices to show
\be
\liminf_{r\rightarrow0} \frac{\frac{4\pi}{3} r^3 -  4\pi \int_a^{r+a} f_\infty^2(s)ds}{\frac{4\pi}{3} r^5}\geq 0.
\ee
Note that this limit can be written as 
\be
\liminf_{r\rightarrow0} \frac{3\int_a^{r+a} ((s-a)^2-f_\infty^2(s))ds}{r^5}.
\ee

We claim $0\le f_\infty (s) \leq s-a$ for $s\in [a,a+\epsilon]$ for small $\epsilon>0$, from which (\ref{tip1}) would follow. If on the contrary we had $f_\infty(r_0) > r_0-a$ for some $r_0 >a$, there exists $k>0$ such that $f_\infty(r_0) > r_0 - a + \frac{1}{k}$. Then if $j$ is so large that $\left\|f_\infty - f_j\right\|_{\mathcal{C}^0([0,D])} \leq \frac{1}{2k}$, it follows that
\begin{align*}
f_j(r_0) &\ge f_\infty(r_0) - 	\left\|f_\infty - f_j\right\|_{\mathcal{C}^0([0,D])} \\
&> r_0 - a +\frac{1}{k} - \left\|f_\infty - f_j\right\|_{\mathcal{C}^0([0,D])} \\
&> r_0 - a + \frac{1}{2k}.
\end{align*}
Therefore, 
\be
\frac{f_j(r_0) - f_j(a)}{r_0 - a} > \frac{\frac{1}{2k} - f_j(a)}{r_0 -a} +1.
\ee
On the other hand, we have 
\be
1\geq \frac{f_j(r_0) - f_j(a)}{r_0 - a}
\ee
by Lemma \ref{newlem}. Combining these together, it follows that 
\be
f_j(a) > \frac{1}{2k}
\ee 
for all large enough $j$.
By taking $j \to \infty$, it follows
$$\frac{1}{2k}\le\lim_{j\rightarrow\infty}f_j(a) = f_\infty(a) = 0,$$ 
a contradiction. Therefore $f_\infty(s) \leq s-a$. The inequality (\ref{tip2}) can be shown similarly.
\end{proof}

\begin{rmrk}
{\rm Whether Proposition \ref{voltip} is true everywhere on $M_\infty$ is an interesting question that we have not been able to answer so far. If it is true, it gives another way of generalizing nonnegativity of scalar curvature to the possibly singular space $M_\infty$, as ``small infinitesimal volumes".}
\end{rmrk}

\section{\bf Intrinsic Flat Convergence to the Limit}

In this section we will prove that there exists a subsequence of $M_j$ that converges to $M_\infty$ in the sense of the SWIF distance.

Recall that in Theorem~\ref{limfun}
we obtained the uniform convergence (possibly passing to a subsequence)
\be
f_j \to f_\infty \textrm{ on } I_k \subset (a,b)\subset [0,D].
\ee

For each $k>0$ we define the following sets
\begin{eqnarray}
W_j&=& W_j^k :=\{ (s,\theta)\in M_j:\, s\in I_k,\, \theta\in {\mathbb{S}}^2\} \subset M_j, \\
W_\infty&=& W_\infty^k :=\{ (s,\theta)\in M_\infty:\, s\in I_k,\, \theta\in {\mathbb{S}}^2\} \subset M_{\infty},
\end{eqnarray}
which are diffeomorphic to $W:= I_k \times {\mathbb{S}}^2$ by diffeomorphisms
\be
\psi_{j}: W \rightarrow W_{j} \quad \textit{\rm and} \quad \psi_{\infty}: W\rightarrow W_{\infty},
\ee
with metric tensors induced from $M_{j}$ and $M_{\infty}$, respectively.
We will use the uniform convergence of the metric tensors $g_j \to g_\infty$ on $W$
to prove the SWIF convergence.  To do so, we will apply the following theorem
of Lakzian-Sormani  \cite{Lakzian-Sormani}:

\begin{thm}[Theorem 4.6 in \cite{Lakzian-Sormani}] \label{thm-subdiffeo}
Suppose $(M_j,g_j)$ and $(M_\infty,g_\infty)$ are oriented
precompact Riemannian manifolds
with diffeomorphic subregions $W_j \subset M_j$ and $W_\infty \subset M_\infty$.
Identify $W_j=W_\infty=W$ by diffeomorphisms $\psi_{j}: W\rightarrow W_{j}$ and $\psi_{\infty}: W\rightarrow W_{\infty}$. Assume that
on $W$ with induced metrics by $\psi_{j}$ and $\psi_{\infty}$ we have
\be \label{thm-subdiffeo-1}
g_j \le (1+\varepsilon)^2 g_\infty \quad \textrm{ and } \quad
g_\infty \le (1+\varepsilon)^2 g_j.
\ee
Then the SWIF distance satisfies
\begin{eqnarray*}
d_{\mathcal{F}}(M_j, M_\infty)
&\le&
\left(2\bar{h} + a\right) \Big[
\vol(W_{j})+\vol(W_\infty)+\area(\partial W_{j})\\
& &+\area(\partial W_{\infty})\Big]
+\vol(M_j\setminus W_j)+\vol(M_\infty\setminus W_\infty),
\end{eqnarray*}
where
\be \label{thm-subdiffeo-5}
\bar{h}= \max\{h,  D_0\sqrt{\varepsilon^2 + 2\varepsilon} \}.
\ee
Here, $a$, $h$, and $D_0$ are defined as follows:
\be
\max\{\diam(M_j),\diam(M_\infty)\} \le D_0,
\ee
\be \label{lambda}
\lambda:=\sup_{x,y \in W}
|d_{M_j}(\psi_j(x),\psi_j(y))-d_{M_\infty}(\psi_\infty(x),\psi_\infty(y))|,
\ee
\be \label{thm-subdiffeo-4}
h =\sqrt{\lambda ( D_0 +\lambda/4 )\,} \le \sqrt{2\lambda D_0},
\ee
\be \label{thm-subdiffeo-3}
a\geq \frac{\arccos(1+\varepsilon)^{-1} }{\pi}D_0.
\ee
\end{thm}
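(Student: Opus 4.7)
The plan is to build a common metric space $Z$ into which both $(M_j, g_j)$ and $(M_\infty, g_\infty)$ embed isometrically, and then exhibit an explicit decomposition $M_j - M_\infty = A + \partial B$ in $Z$ whose masses can be read off from the stated geometric data. Concretely, I would glue a thin Riemannian ``neck'' of topology $W \times [0, T]$ between $M_j$ and $M_\infty$, identifying $W \times \{0\}$ with $W_j \subset M_j$ via $\psi_j$ and $W \times \{T\}$ with $W_\infty \subset M_\infty$ via $\psi_\infty$, choose the metric on the neck so that both end-inclusions are isometric, and take $Z$ to be the resulting glued space. The SWIF distance is then controlled by the mass of the filling $B$ (the neck viewed as an $(m+1)$-current) plus the mass of a residual $m$-current $A$ that absorbs $M_j \setminus W_j$, $M_\infty \setminus W_\infty$, and the lateral boundary $\partial W \times [0,T]$ of the neck.

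The central construction is the metric on the neck, and here the total length $T = 2\bar h + a$ enters for two essentially independent reasons. The tensorial hypothesis $(1+\varepsilon)^{-2} g_\infty \le g_j \le (1+\varepsilon)^2 g_\infty$ prevents a straight product interpolation from being isometric on both faces simultaneously, and the fix is a spherical/hemispheric warping piece of length at least $\arccos((1+\varepsilon)^{-1}) D_0/\pi$, chosen so that one end face of the neck carries $g_j$ while the other inherits the conformally rescaled $(1+\varepsilon)^{-2} g_j$ through a $\cos$-warping; this absorbs the tensorial distortion exactly. The vertical height $2\bar h$ plays two roles: the contribution $D_0 \sqrt{\varepsilon^2 + 2\varepsilon}$ is the graph height required to realize a pointwise metric distortion of ratio $(1+\varepsilon)$ isometrically (coming from $(1+\varepsilon)^2 = 1 + (\varepsilon^2 + 2\varepsilon)$), while $h = \sqrt{\lambda(D_0+\lambda/4)}$ is the graph height needed to reconcile the intrinsic-distance discrepancy $\lambda$ between $(W_j, d_{M_j})$ and $(W_\infty, d_{M_\infty})$, essentially via the elementary estimate $d_2^2 - d_1^2 \le \lambda(d_1 + d_2)$. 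Taking $\bar h$ to be the maximum of the two lets the neck simultaneously absorb both sources of distortion.

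Once $Z$ is constructed, the flat estimate is bookkeeping. Writing $M_j - M_\infty = \partial B + A$ in $Z$ with $B$ the current represented by the neck and $A = (M_j \setminus W_j) - (M_\infty \setminus W_\infty) - (\partial W \times [0,T])$ (the last term signed to cancel the lateral part of $\partial B$), one gets
\begin{align*}
\mass(B) &\le (2\bar h + a)\bigl(\vol(W_j) + \vol(W_\infty)\bigr), \\
\mass(\partial W \times [0,T]) &\le (2\bar h + a)\bigl(\area(\partial W_j) + \area(\partial W_\infty)\bigr),
\end{align*}
together with contributions $\vol(M_j \setminus W_j)$ and $\vol(M_\infty \setminus W_\infty)$ from the outside pieces. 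Summing yields the claimed upper bound on $d_{\mathcal{F}}(M_j, M_\infty)$.

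The hard part will be verifying that the inclusions $M_j, M_\infty \hookrightarrow Z$ remain \emph{isometric} rather than merely $1$-Lipschitz. The danger is that the neck provides a shortcut between two points of $M_j$ (or of $M_\infty$) travelling through the neck and re-emerging, thereby shortening the intrinsic distance; the hemispheric/vertical construction is chosen precisely so that any such candidate path is at least as long as the original geodesic, and this requires checking a length inequality in the warped product for every such path. A secondary subtlety, needed so that the decomposition is an identity of currents and not merely of sets, is to verify that the gluing yields an integral current structure on $Z$ compatible with the pushforwards, so that $M_j - M_\infty = A + \partial B$ holds at the level of integral currents.
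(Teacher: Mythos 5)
This paper does not actually prove Theorem \ref{thm-subdiffeo}: it is imported verbatim as Theorem 4.6 of \cite{Lakzian-Sormani} and used as a black box, so there is no in-paper proof to compare your attempt against. That said, your outline reproduces the strategy of the original proof in \cite{Lakzian-Sormani} (which in turn rests on the explicit filling constructions of Sormani--Wenger): glue a warped neck $W\times[0,2\bar h+a]$ between the two manifolds, with the hemispherical piece of width controlled by $\arccos((1+\varepsilon)^{-1})D_0/\pi$ absorbing the tensorial distortion \eqref{thm-subdiffeo-1}, and the vertical height $\bar h=\max\{h,\,D_0\sqrt{\varepsilon^2+2\varepsilon}\}$ absorbing both the pointwise conformal discrepancy and the distance discrepancy $\lambda$; the flat-distance bound is then the mass of the neck plus the masses of $M_j\setminus W_j$, $M_\infty\setminus W_\infty$, and the lateral boundary, exactly as you write. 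You also correctly isolate where essentially all the work lies, namely showing that the inclusions $M_j,M_\infty\hookrightarrow Z$ are distance-preserving rather than merely $1$-Lipschitz (no shortcuts through the neck); since that verification is only announced and not carried out, your text is a faithful plan of the known proof rather than a self-contained replacement for the citation.
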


First note that by the uniform convergence proven in Theorem~\ref{limfun}
for any $k>0$ we can take $j$ large enough to have (\ref{thm-subdiffeo-1}).

\subsection{Small Volumes}
Next we show that the volumes of $M_j\setminus W_j$ are small.

\begin{lem} \label{lem-vol-M-not-W}
For each fixed large $k>0$, if $j$ is sufficiently large then the following bounds hold.
\be
\vol(M_j\setminus W_j)\le  \frac{16\pi D}{k^{2}},
\ee
\be
\vol(M_\infty\setminus W_\infty)\le \frac{4\pi D}{k^{2}}.
\ee
\end{lem}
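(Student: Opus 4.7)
The plan is to directly estimate the two volume integrals using the rotational symmetry formula $\vol = 4\pi \int f^2 \, ds$ together with uniform convergence $f_j \to f_\infty$ on $[0,D]$ and the defining property of $I_k = [a_k, b_k]$, namely $f_\infty < 1/k$ outside $I_k$.

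First I would handle the limit space. Since $I_k = \{x : f_\infty(x) \ge 1/k\}$ is connected by Lemma \ref{ak-bk}, on the complement $[0,D]\setminus I_k$ we have $f_\infty(s) \le 1/k$. Extending $f_\infty$ by $0$ outside $[a_\infty, b_\infty]$, we compute
\begin{equation}
\vol(M_\infty \setminus W_\infty) = 4\pi \int_{[0,D]\setminus I_k} f_\infty^2(s)\, ds \le 4\pi \cdot \frac{1}{k^2} \cdot D = \frac{4\pi D}{k^2},
\end{equation}
which is the second bound.

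For the first bound, I would use that by Theorem \ref{limfun} a subsequence of $f_j$ converges uniformly to $f_\infty$ on $[0,D]$. Therefore, for $j$ sufficiently large (depending on $k$),
\begin{equation}
\|f_j - f_\infty\|_{\mathcal{C}^0([0,D])} \le \frac{1}{k}.
\end{equation}
Combining with $f_\infty \le 1/k$ on $[0,D]\setminus I_k$ gives $f_j(s) \le 2/k$ on the same set. Hence
\begin{equation}
\vol(M_j \setminus W_j) = 4\pi \int_{[0,D]\setminus I_k} f_j^2(s)\, ds \le 4\pi \cdot \frac{4}{k^2} \cdot D = \frac{16\pi D}{k^2}.
\end{equation}

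There is no serious obstacle here; the only subtlety is that uniform convergence is in general only along a subsequence and requires $j$ to be taken large enough in terms of $k$, exactly the regime stated in the lemma. The bounds will be used in the next step together with Theorem \ref{thm-subdiffeo} (applied on the diffeomorphic region $W$) to drive the SWIF distance $d_{\mathcal{F}}(M_j, M_\infty) \to 0$ by sending $j \to \infty$ and then $k \to \infty$.
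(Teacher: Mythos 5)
Your proof is correct and follows essentially the same route as the paper's: decompose the complement as $[0,a_k)\cup(b_k,D]$ (resp. $[a_\infty,a_k)\cup(b_k,b_\infty]$), use the definition of $I_k$ to get $f_\infty<1/k$ there, and use uniform convergence to get $f_j<2/k$ there for large $j$, then integrate. The constants and the handling of the subsequence/large-$j$ regime match the paper exactly.
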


\begin{proof}
We choose and fix a large $k>0$ so that $I_{k}$ defined in Lemma $\ref{ak-bk}$ is a connected interval, $I_{k}=[a_{k}, b_{k}]$. Then,
$$
M_{j}\setminus W_{j}=\{(s, \theta) \mid s\in [0, a_{k})\cup (b_{k}, D], \ \ \theta\in \mathbb{S}^{2}\} \subset M_{j}.
$$

Because $f_{j}$ converges to $f_{\infty}$ uniformly on $[0, D]$ (by passing to a subsequence, if necessary), there exists a large $j_{0}$ such that for all $j>j_{0}$,
$$
|f_{j}(s)-f_{\infty}(s)|<\frac{1}{k}, \qquad \forall s\in [0, D].
$$
In particular, for any $j>j_{0}$ and any $s\in [0, a_{k})\cup (b_{k}, D]$, we have
$$
f_{j}(s)<f_{\infty}(s)+\frac{1}{k}<\frac{2}{k}.
$$

Thus,
\begin{align*}
\vol(M_{j}\setminus W_{j})
&=\int^{a_{k}}_{0}\int_{\mathbb{S}^{2}}f^{2}_{j}(s) \ d{\rm vol}_{g_{\mathbb{S}^{2}}} ds+\int^{D}_{b_{k}}\int_{\mathbb{S}^{2}}f^{2}_{j}(s) \ d{\rm vol}_{g_{S^{2}}}ds\\
&=4\pi\int^{a_{k}}_{0}f^{2}_{j}(s) \ ds+4\pi\int^{D}_{b_{k}}f^{2}_{j}(s)\ ds\\
&<4\pi \frac{4}{k^{2}}a_{k} +4\pi \frac{4}{k^{2}}(D-b_{k})\\
&\le\frac{16\pi D}{k^{2}}.
\end{align*}
This completes the proof of the first inequality. Similarly, note that

$$
M_{\infty}\setminus W_{\infty}=\{(s, \theta) \mid s\in [a_{\infty}, a_{k})\cup (b_{k}, b_{\infty}], \ \ \theta\in S^{2}\} \subset M_{\infty}.
$$
Thus, similarly we have
\begin{align*}
\vol(M_{\infty}\setminus W_{\infty})
&=\int^{a_{k}}_{a_{\infty}}\int_{\mathbb{S}^{2}}f^{2}_{\infty}(s) \ d{\rm vol}_{g_{\mathbb{S}^{2}}}ds+\int^{b_{\infty}}_{b_{k}}\int_{\mathbb{S}^{2}}f^{2}_{\infty}(s) \ d{\rm vol}_{g_{S^{2}}}ds\\
&<4\pi \frac{1}{k^{2}}(a_{k}-a_{\infty})+4\pi \frac{1}{k^{2}}(b_{\infty}-b_{k})\\
&\le\frac{4\pi D}{k^{2}}.
\end{align*}
This completes the proof of the second inequality.

\end{proof}

\subsection{Uniformly Bounded Volumes and Areas}
 \begin{lem} \label{lem-vol-W}
For each fixed $k>0$ we have uniform upper bounds on volumes
of the diffeomorphic regions:
\be
\vol(W_j)\le \vol(M_{j})\leq 4\pi D^{3},
\ee
and
\be
 \vol(W_\infty)\le 4\pi D^{3}.
\ee
\end{lem}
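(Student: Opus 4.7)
The plan is to reduce both inequalities to a Lipschitz-based pointwise bound $f_j(s) \le D$ and $f_\infty(s) \le D$, then integrate.

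For the first inequality, since $W_j \subset M_j$, I only need to bound $\vol(M_j) \le 4\pi D^3$. By Lemma~\ref{lem-Vol}, $\vol(M_j) = 4\pi \int_0^{D_j} f_j(s)^2\, ds$. By Lemma~\ref{newlem} and $f_j(0)=0$, the Lipschitz constant of $f_j$ is at most $1$, so $f_j(s) \le s$ for all $s \in [0, D_j]$. Since $D_j \le D$ by Lemma~\ref{lem-diam}, it follows that $f_j(s)^2 \le D^2$ on $[0, D_j]$, and consequently $\vol(M_j) \le 4\pi D^2 \cdot D_j \le 4\pi D^3$.

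For the second inequality, I use the analogous estimate on the limit. By Theorem~\ref{limfun}, $f_\infty$ is Lipschitz with constant~$1$ on $[0,D]$, and $f_\infty(a_\infty) = 0$ by Proposition~\ref{finfty-to-zero} (equivalently, $f_\infty(a) = 0$ where $a = a_\infty$ by Lemma~\ref{a-b}). Hence $f_\infty(s) \le s - a_\infty \le D$ for all $s \in [a_\infty, b_\infty]$. Since by Definition~\ref{limitspace} the limit space has $\vol(M_\infty) = 4\pi \int_{a_\infty}^{b_\infty} f_\infty(s)^2 \, ds$, and $W_\infty \subset M_\infty$, the same computation gives $\vol(W_\infty) \le \vol(M_\infty) \le 4\pi D^2 (b_\infty - a_\infty) \le 4\pi D^3$.

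This is essentially a routine calculation; the only thing to be careful about is invoking the right bounds already established: Lipschitzness of $f_j$ (from the scalar curvature hypothesis via Lemma~\ref{newlem}), Lipschitzness of $f_\infty$ (inherited in the uniform limit, Theorem~\ref{limfun}), and the diameter bound $D_j \le D$. There is no significant obstacle.
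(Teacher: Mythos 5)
Your proof is correct and follows essentially the same route as the paper: both arguments reduce to the pointwise bound $f_j\le D$ (from the Lipschitz estimate of Lemma~\ref{newlem}) and $f_\infty\le D$, and then integrate over the warped product; the only cosmetic difference is that the paper obtains $f_\infty\le D$ by passing the bound on $f_j$ through uniform convergence, while you get it directly from the Lipschitz property of $f_\infty$ in Theorem~\ref{limfun}.
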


\begin{proof}
The first half of the first inequality is clear, since $W_{j}$ is an embedded Riemannian submanifold of $M_{j}$. Since $f_{j}$ converges to $f_{\infty}$  on $[0, D]$ and
$$0\leq f_{j}(s)\leq D, \qquad \forall s\in [0, D],$$
it follows that
$$
0\le f_{\infty}(s)\le D, \qquad \forall s\in [0, D].
$$
Thus,
$$
\vol(M_{j})=\int^{D}_{0}\int_{\mathbb{S}^{2}}f^{2}_{j}(s) \ d{\rm vol}_{g_{\mathbb{S}^{2}}}ds\leq 4\pi\int^{D}_{0}D^{2} \ ds=4\pi D^{3}.
$$
Similarly,
$$
\vol(W_{\infty})=\int^{b_{k}}_{a_{k}}\int_{\mathbb{S}^{2}}f^{2}_{\infty}(s) \ d{\rm vol}_{g_{\mathbb{S}^{2}}}ds \le 4\pi \int^{D}_{0}D^{2} \ ds=4\pi D^{3}.
$$
\end{proof}

\begin{lem} \label{lem-area-W}
For each fixed $k>0$ we have uniform upper bounds on the areas of the boundaries,
\be
\area(\partial W_{j})\le 8\pi D^{2},
\ee
\be
\area(\partial W_{\infty})\le 8\pi D^{2}.
\ee
\end{lem}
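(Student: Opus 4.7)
The plan is essentially identical to the proof of \lemref{lem-vol-W}, but one dimension down. The boundary $\partial W_j$ consists of the two rotational spheres $\{s = a_k\} \times \mathbb{S}^2$ and $\{s = b_k\} \times \mathbb{S}^2$ sitting inside $M_j$, and each of these is a round sphere of radius $f_j(a_k)$ and $f_j(b_k)$ respectively. Hence
\be
\area(\partial W_j) = 4\pi f_j(a_k)^2 + 4\pi f_j(b_k)^2.
\ee
So I would only need to bound $f_j(a_k)$ and $f_j(b_k)$ by $D$. This follows immediately from \lemref{newlem}: since $|f_j'|\le 1$ on $(0, D_j)$ and $f_j(0)=0$, the fundamental theorem of calculus gives $f_j(s) \le s \le D_j \le D$ for all $s\in[0,D_j]$, and $f_j = 0$ on $[D_j, D]$ by our extension convention. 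Therefore $f_j(a_k)^2 + f_j(b_k)^2 \le 2D^2$, which gives $\area(\partial W_j) \le 8\pi D^2$.

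The argument for $\partial W_\infty$ is the same. By \thmref{limfun}, the limit $f_\infty$ is Lipschitz with constant $1$ on $[0,D]$, and $f_\infty(a_\infty)=0$ by \propref{finfty-to-zero}. Hence for $a_k \in [a_\infty, A_\infty]$ we have
\be
f_\infty(a_k) = |f_\infty(a_k) - f_\infty(a_\infty)| \le a_k - a_\infty \le D,
\ee
and analogously $f_\infty(b_k) \le D$. The boundary $\partial W_\infty$ again consists of two round spheres of these radii, so
\be
\area(\partial W_\infty) = 4\pi f_\infty(a_k)^2 + 4\pi f_\infty(b_k)^2 \le 8\pi D^2.
\ee
This completes the proof; there is no real obstacle here, as both bounds reduce to the Lipschitz-1 property of the warping functions combined with the diameter bound.
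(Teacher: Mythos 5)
Your proof is correct and follows essentially the same route as the paper: both decompose $\partial W_j$ (resp.\ $\partial W_\infty$) into the two round spheres at $s=a_k$ and $s=b_k$ of radii $f_j(a_k)$, $f_j(b_k)$ and then bound each radius by $D$ using the Lipschitz-$1$ property of the warping functions. The only cosmetic difference is that you spell out the derivation of $f_j(s)\le D$ from Lemma~\ref{newlem}, whereas the paper cites this bound as already established in the proof of Lemma~\ref{lem-vol-W}.
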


\begin{proof}
For each fixed $k>0$,
$$
\partial W_{j}=(\{a_{k}\}\times \mathbb{S}^{2}, f^{2}_{j}(a_{k})g_{\mathbb{S}^{2}})\cup(\{b_{k}\}\times \mathbb{S}^{2}, f^{2}_{j}(b_{k})g_{\mathbb{S}^{2}}).
$$
Thus,
$$\area(\partial W_{j})=4\pi f^{2}_{j}(a_{k})+4\pi f^{2}_{j}(b_{k})\le 4\pi D^{2}+4\pi D^{2}=8\pi D^{2}.$$

Moreover,
$$
\partial W_{\infty}=(\{a_{k}\}\times \mathbb{S}^{2}, f^{2}_{\infty}(a_{k})g_{\mathbb{S}^{2}})\cup(\{b_{k}\}\times \mathbb{S}^{2}, f^{2}_{\infty}(b_{k})g_{\mathbb{S}^{2}})/
$$
Thus,
$$\area(\partial W_{\infty})=4\pi f^{2}_{\infty}(a_{k})+4\pi f^{2}_{\infty}(b_{k})\le 4\pi D^{2}+4\pi D^{2}=8\pi D^{2}.$$
\end{proof}

\subsection{Diameter and Distance bounds}

In this subsection we prove the SWIF convergence to $M_\infty$. We begin by estimating $h,\bar{h}, \lambda, h, a$ appearing in Theorem \ref{thm-subdiffeo}. Following Theorem \ref{thm-subdiffeo}, define $D_0$ by
\be
D_0= \max\{D, \diam(M_\infty)\}.
\ee
Then defining $a$ as \be
a>\frac{\arccos(1+\varepsilon)^{-1} }{\pi}D_0,
\ee
we may take $a$ arbitrarily small depending only on $\varepsilon$.

\begin{lem} \label{lem-h}
For each fixed large $k>0$, there exists $j_{0}(k)$ such that for all $j>j_{0}(k)$,
\be
\lambda=\sup_{x,y \in W}
\left|d_{M_j}(\psi_j(x),\psi_j(y))-d_{M_\infty}(\psi_\infty(x),\psi_\infty(y))\right| \le \frac{D+\diam(M_{\infty})+8\pi}{k-1}.
\ee
Thus $h$ and $\bar{h}$ can be made arbitrarily small.
\end{lem}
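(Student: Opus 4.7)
The plan is to bound $|d_{M_j}(x, y) - d_{M_\infty}(x, y)|$ uniformly over $x, y \in W$ by exploiting the uniform convergence $f_j \to f_\infty$ from Theorem \ref{limfun} together with the lower bound $f_\infty \ge 1/k$ on $I_k = [a_k, b_k]$.

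First I would choose $j_0(k)$ large enough that for every $j \ge j_0(k)$,
\[
\|f_j - f_\infty\|_{C^0[0,D]} \le \frac{1}{k(k-1)}.
\]
This ensures $f_j(a_k), f_j(b_k) \le \tfrac{1}{k-1}$ and $f_j \ge \tfrac{k-2}{k(k-1)} > 0$ on $I_k$ (for $k \ge 3$). The key pointwise length-comparison is that for any curve $\gamma(t) = (s(t), \theta(t))$ in $W$,
\[
|L_j(\gamma) - L_\infty(\gamma)| \le \int |f_j - f_\infty| \cdot |\theta'(t)|_{g_{\mathbb{S}^2}} \, dt \le \|f_j - f_\infty\|_{C^0} \cdot k \cdot L_\infty(\gamma) \le \frac{L_\infty(\gamma)}{k-1},
\]
using $f_\infty \ge 1/k$ on $I_k$ to bound $\int |\theta'|\, dt \le k\, L_\infty(\gamma)$ (from $f_\infty|\theta'| \le \sqrt{(s')^2 + f_\infty^2|\theta'|^2}$).

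For the direction $d_{M_j}(x, y) \le d_{M_\infty}(x, y) + \tfrac{\diam(M_\infty) + 4\pi}{k-1}$, take a minimizing path $\gamma_\infty$ in $M_\infty$ from $x$ to $y$, of length at most $\diam(M_\infty)$. If $\gamma_\infty$ exits $W_\infty$ through $\{s = a_k\}$ or $\{s = b_k\}$, replace each such round-trip excursion by an arc along the corresponding boundary sphere, of length at most $\pi f_\infty(a_k) \le \pi/k$ or $\pi f_\infty(b_k) \le \pi/k$; since $\gamma_\infty$ is minimizing only a bounded number of excursions need be replaced, contributing a total of at most $\tfrac{4\pi}{k-1}$ in added length. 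The resulting path $\tilde\gamma_\infty \subset W_\infty$ satisfies $L_\infty(\tilde\gamma_\infty) \le d_{M_\infty}(x,y) + \tfrac{4\pi}{k-1}$. Transporting $\tilde\gamma_\infty$ to $W_j$ via the diffeomorphisms $\psi_\infty, \psi_j$ and applying the length comparison above then produces a path from $x$ to $y$ in $M_j$ of $g_j$-length at most $L_\infty(\tilde\gamma_\infty) + \tfrac{L_\infty(\tilde\gamma_\infty)}{k-1} \le d_{M_\infty}(x,y) + \tfrac{\diam(M_\infty) + 4\pi}{k-1}$.

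The reverse inequality $d_{M_\infty}(x, y) \le d_{M_j}(x, y) + \tfrac{D + 4\pi}{k-1}$ is proved symmetrically, starting from a minimizing path in $M_j$ (of length at most $\diam(M_j) \le D$). Combining the two gives $\lambda \le \tfrac{D + \diam(M_\infty) + 8\pi}{k-1}$, and the statement about $h$ and $\bar h$ follows directly from their definitions \eqref{thm-subdiffeo-4} and \eqref{thm-subdiffeo-5}: as $k \to \infty$ we get $\lambda \to 0$, hence $h \to 0$, and since $\varepsilon$ can be chosen to vanish as well, $\bar h$ also tends to zero. The main technical obstacle is the excursion bookkeeping, namely showing that a minimizing geodesic between two points of $W$ can be modified to one that stays in $W$ with only a controlled loss in length; this rests on carefully applying the minimizing property of $\gamma_\infty$ (and $\gamma_j$) to rule out pathological oscillations across the boundary spheres of $W$.
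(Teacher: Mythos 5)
Your overall strategy is the same as the paper's: use the uniform convergence $f_j\to f_\infty$ together with $f_\infty\ge 1/k$ on $I_k$ to compare lengths of curves in $W$ up to a factor $1+O(1/k)$, then handle the fact that minimizing paths between points of $W$ may leave $W$ by rerouting them along the boundary spheres $\{a_k\}\times\mathbb{S}^2$ and $\{b_k\}\times\mathbb{S}^2$, whose radii are $O(1/k)$. Your pointwise length comparison $|L_j(\gamma)-L_\infty(\gamma)|\le\int|f_j-f_\infty|\,|\theta'|\,dt\le \frac{1}{k-1}L_\infty(\gamma)$ is a clean, correct variant of the paper's two-sided metric comparison $\bigl(1+\tfrac1k\bigr)^{-2}g_\infty\le g_j\le\bigl(1+\tfrac1k\bigr)^{2}g_\infty$, and your final assembly of the two one-sided inequalities matches the paper's.

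The one genuine gap is exactly the step you flag at the end: you assert that ``since $\gamma_\infty$ is minimizing only a bounded number of excursions need be replaced,'' but you give no argument for this, and it is not needed. A minimizing path (especially for the merely $\mathcal{C}^0$ limit metric, where regularity of minimizers is delicate) could a priori meet a boundary sphere at infinitely many parameters, so ``excursion counting'' is the wrong bookkeeping. The paper avoids it entirely: let $p=\gamma(t_1)$ and $q=\gamma(t_2)$ be the \emph{first} and \emph{last} points where $\gamma$ meets $\partial W$. If they lie on the same component, replace all of $\gamma|_{[t_1,t_2]}$ by a single minimizing arc in that sphere; if they lie on different components, one additionally takes the last intersection with the first sphere and the first subsequent intersection with the second sphere, and replaces the two outer segments by arcs in the respective spheres (the middle segment is then automatically contained in $W$). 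This uses at most two replacement arcs, each of length at most $\pi f(a_k)$ or $\pi f(b_k)$, regardless of how many times $\gamma$ crosses $\partial W$, and yields the stated $O(1/k)$ loss. With that substitution your argument closes; the deduction that $h$ and $\bar h$ tend to $0$ from $\lambda\to 0$ and $\varepsilon=1/k\to 0$ is correct as you wrote it.
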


\begin{proof}
Because $f_{j}$ uniformly converges to $f_{\infty}$ on $[0, D]$, passing to a subsequence if necessarily, for any fixed large $k$, there exists $j_{0}(k)$ such that for all $j>j_{0}(k)$ and $s\in [0, D]$ we have
$$
\left|f_{j}(s)-f_{\infty}(s)\right|<\frac{1}{k(k+1)}.
$$
Thus, for all $s\in \overline{[0, D]\setminus I_{k}}=[0, a_{k}]\cup[b_{k}, D]$, and all $j>j_{0}(k)$,
$$
f_{j}(s)<f_{\infty}(s)+\frac{1}{k(k+1)}\leq\frac{1}{k}+\frac{1}{k(k+1)}<\frac{2}{k}.
$$
Moreover, because $\ds{f_{\infty}(s)\geq\frac{1}{k}}$ for all $s\in I_{k}$, we have that for all $j>j_{0}(k)$ and $s\in I_{k}$,
$$
f_{j}(s)<f_{\infty}(s)+\frac{1}{k(k+1)}\leq f_{\infty}(s)+f_{\infty}(s)\frac{1}{k+1}\leq\left(1+\frac{1}{k}\right)f_{\infty}(s),
$$
on the other hand,
$$f_{j}(s)>f_{\infty}(s)-\frac{1}{k(k+1)}\geq f_{\infty}(s)-f_{\infty}(s)\frac{1}{k+1}=\frac{k}{k+1}f_{\infty}(s)=\frac{1}{1+\frac{1}{k}}f_{\infty}(s).
$$

Thus, on $W=I_{k}\times \mathbb{S}^{2}$, for all $j>j_{0}(k)$, we have
$$
\left(\frac{1}{1+\frac{1}{k}}\right)^{2}g_{\infty}\leq g_{j} \leq \left(1+\frac{1}{k}\right)^{2}g_{\infty}.
$$
Then for any $x, y\in W$, and any piecewise smooth path $\gamma(t)$ lying in $W$ connecting $x$ and $y$, we have
$$
\left(1-\frac{\frac{1}{k}}{1+\frac{1}{k}}\right)\sqrt{g_{\infty}(\gamma^{\prime}(t), \gamma^{\prime}(t))}\leq\sqrt{g_{j}(\gamma^{\prime}(t), \gamma^{\prime}(t))}\leq \left(1+\frac{1}{k}\right)\sqrt{g_{\infty}(\gamma^{\prime}(t), \gamma^{\prime}(t))}.
$$
Therefore
$$
-\frac{1}{k}\sqrt{g_{\infty}(\gamma^{\prime}(t), \gamma^{\prime}(t))}\leq \sqrt{g_{j}(\gamma^{\prime}(t), \gamma^{\prime}(t))}-\sqrt{g_{\infty}(\gamma^{\prime}(t), \gamma^{\prime}(t))}\leq \frac{1}{k}\sqrt{g_{\infty}(\gamma^{\prime}(t), \gamma^{\prime}(t))},
$$
that is,
\be\label{TNC}
|\sqrt{g_{j}(\gamma^{\prime}(t), \gamma^{\prime}(t))}-\sqrt{g_{\infty}(\gamma^{\prime}(t), \gamma^{\prime}(t))}|\leq \frac{1}{k}\sqrt{g_{\infty}(\gamma^{\prime}(t), \gamma^{\prime}(t))}.
\ee

Now let $L_{j}(\gamma)$ and $L_{\infty}(\gamma)$ denote the length of the path $\gamma$ with respect to Riemannian metrics on $(M_{j}, g_{j})$ and $(M_{\infty}, g_{\infty})$ respectively. Then by $(\ref{TNC})$, we have
\be\label{Length-difference}
\left|L_{j}(\gamma)-L_{\infty}(\gamma)\right|\leq\frac{1}{k}L_{\infty}(\gamma),
\ee
for any path $\gamma$ in $W$.

Let $\gamma_{1}: [0, 1]\rightarrow M_{j}=[0, D_j]\times \mathbb{S}^{2}$ be the path that realizes the distance of $\psi_{j}(x)$ and $\psi_{j}(y)$ in $(M_{j}, g_{j})$ so that $L_{j}(\gamma_{1})=d_{M_{j}}(\psi_{j}(x), \psi_{j}(y))$. Let $\gamma_{2}$ be the path in $M_{\infty}=I_{\infty}\times \mathbb{S}^{2}$ that realizes the distance of $\psi_{\infty}(x)$ and $\psi_{\infty}(y)$ in $(M_{\infty}, g_{\infty})$ so that $L_{\infty}(\gamma_{2})=d_{M_{\infty}}(\psi_{\infty}(x), \psi_{\infty}(y))$.

Note $\gamma_{1}$ may not entirely lie in $W$ even though its endpoints are in $W$. The boundary $\partial W=\{a_{k}\}\times\mathbb{S}^{2}\cup\{b_k\}\times\mathbb{S}^{2}$ of $W$ has two connected components. If $\gamma_{1}$ does not entirely lie in $W$, there are two possibilities: either the first and the last intersecting points of $\gamma_{1}$ and $\partial W$ are in the same component, or the first and the last intersecting points of $\gamma_{1}$ and $\partial W$ are in different components.

In the first case, without loss of generality, we may assume that the first and the last intersecting points of $\gamma_{1}$ and $\partial W$ are $p=\gamma_{1}(t_{1})$ and $q=\gamma_{1}(t_{2})$ and both in $\{a_{k}\}\times\mathbb{S}^{2}$. Here $t_{1}<t_{2}$. Now we replace $\gamma_{1}\vert_{[t_{1}, t_{2}]}$ by the shortest geodesic $\delta$ connecting $p$ and $q$ in $\{a_{k}\}\times\mathbb{S}^{2}$, whose length is less than $\frac{2\pi}{k}$, since the diameter of $\{a_{k}\}\times\mathbb{S}^{2}$ is less than $\frac{2\pi}{k}$. Then we obtain a piece-wise smooth curve $\overline{\gamma}_{1}=\gamma_{1}\vert_{[0, t_{1}]}\cup\delta\cup\gamma_{1}\vert_{[t_{2}, 1]}$, which is in $W$.

In the second case, without loss of generality, we say that the first intersecting point of $\gamma_{1}$ and $\partial W$ is $p=\gamma_{1}(t_{1})\in\{a_{k}\}\times\mathbb{S}^{2}$ and the last intersecting point of $\gamma_{1}$ and $\partial W$ is $q=\gamma_{1}(t_{4})\in\{b_{k}\}\times\mathbb{S}^{2}$. Let $p^{\prime}=\gamma_{1}(t_{2})$ be the last intersecting point of $\gamma_{1}$ and $\{a_{k}\}\times\mathbb{S}^{2}$, and $q^{\prime}=\gamma_{1}(t_{3})$ be the first intersecting point of $\gamma_{1}$ and $\{b_{k}\}\times\mathbb{S}^{2}$ after $p^{\prime}=\gamma_{1}(t_{2})$. Note $0\leq t_{1}\leq t_{2}<t_{3}\leq t_{4}\leq1$. Now we replace $\gamma_{1}\vert_{[t_{1}, t_{2}]}$ by the shortest geodesic $\delta$ connecting $p$ and $p^{\prime}$ in $\{a_{k}\}\times\mathbb{S}^{2}$, whose length is less than $\frac{2\pi}{k}$. Similarly, we replace $\gamma_{1}\vert_{[t_{3}, t_{4}]}$ by the shortest geodesic $\delta^{\prime}$ connecting $q$ and $q^{\prime}$ in $\{b_{k}\}\times\mathbb{S}^{2}$, whose length is also less than $\frac{2\pi}{k}$. Then we obtain a piece-wise smooth curve $\overline{\gamma}_{1}=\gamma_{1}\vert_{[0, t_{1}]}\cup\delta\cup\gamma_{1}\vert_{[t_{2}, t_{3}]}\cup\delta^{\prime}\cup\gamma_{1}\vert_{[t_{4}, 1]}$, which is in $W$.

Similarly, if $\gamma_{2}$ is not entirely in $W$, then we do the same thing as above for $\gamma_{2}$ to obtain $\overline{\gamma}_{2}$, which is in $W$.  Clearly, $\overline{\gamma}_{1}, \overline{\gamma}_{2}\subset W$. Moreover, by the construction of $\overline{\gamma}_{1}$ and $\overline{\gamma}_{2}$, we can easily obtain
\be\label{inequalities-for-length}
\begin{aligned}
L_{j}(\gamma_{1})\leq L_{j}(\overline{\gamma}_{1}),
& & L_{\infty}(\gamma_{2})\leq L_{\infty}(\overline{\gamma}_{2}),\\
L_{j}(\overline{\gamma}_{1})-\frac{4\pi}{k}\leq L_{j}(\gamma_{1}),
& & L_{\infty}(\overline{\gamma}_{2})-\frac{2\pi}{k}\leq L_{\infty}(\gamma_{2}).
\end{aligned}
\ee
Here, in the last inequality we used the fact that \be \nonumber f_{\infty}(a_{k})=f_{\infty}(b_{k})=\frac{1}{k}. \ee

If $\gamma_{1}$ (or $\gamma_{2}$) already entirely lies in $W$, then we keep it and simply say $\overline{\gamma}_{1}=\gamma_{1}$ (or $\overline{\gamma}_{2}=\gamma_{2}$) so the inequalities in $(\ref{inequalities-for-length})$ still hold.

Finally, by using inequalities in $(\ref{Length-difference})$ and $(\ref{inequalities-for-length})$, we have
\begin{align*}
&\quad d_{M_{j}}(\psi_{j}(x), \psi_{j}(y))-d_{M_{\infty}}(\psi_{\infty}(x), \psi_{\infty}(y))\\
&= L_{j}(\gamma_{1})-L_{\infty}(\gamma_{2})\\
&\leq L_{j}(\overline{\gamma}_{1})-L_{\infty}(\overline{\gamma}_{2})+\frac{2\pi}{k}\\
&\leq \frac{1}{k}L_{\infty}(\overline{\gamma}_{2})+\frac{2\pi}{k}\\
&\leq \frac{1}{k}L_{\infty}(\gamma_{2})+\frac{2\pi}{k^{2}}+\frac{\pi}{k}\\
&\leq \frac{\diam(M_{\infty})+4\pi}{k}
\leq \frac{D+\diam(M_{\infty})+8\pi}{k-1},
\end{align*}
on the other hand,
\begin{align*}
&\quad d_{M_{j}}(\psi_{j}(x), \psi_{j}(y))-d_{M_{\infty}}(\psi_{\infty}(x), \psi_{\infty}(y))\\
&= L_{j}(\gamma_{1})-L_{\infty}(\gamma_{2})\\
&\geq L_{j}(\overline{\gamma}_{1})-\frac{4\pi}{k}-L_{\infty}(\overline{\gamma}_{2})\\
&\geq -\frac{1}{k}L_{\infty}(\overline{\gamma}_{1})-\frac{4\pi}{k}\\
&\geq -\frac{1}{k-1}L_{j}(\overline{\gamma}_{1})-\frac{4\pi}{k}\\
&\geq -\frac{1}{k-1}L_{j}(\gamma_{1})-\frac{4\pi}{k(k-1)}-\frac{4\pi}{k}\\
&\geq -\frac{\diam(M_{j})+8\pi}{k-1}
\geq -\frac{D+\diam(M_{\infty})+8\pi}{k-1}.
\end{align*}
This completes the proof.
\end{proof}

Finally we can prove the following theorem applying these lemmas and carefully
balancing the choice of $k$ and taking $j$ large enough.

\begin{thm}\label{swif} Under the assumptions in Theorem $\ref{main-result}$, there exists a subsequence of $\{M_{j}\}$ that converges to
$M_\infty$ in SWIF sense.
\end{thm}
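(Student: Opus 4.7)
The plan is to invoke the Lakzian--Sormani Theorem~\ref{thm-subdiffeo} with the diffeomorphic subregions $W_j \subset M_j$ and $W_\infty \subset M_\infty$ defined at the start of this section, and then drive every term on the right-hand side of its conclusion to zero by choosing $k$ large and then $j$ large depending on $k$. The uniform convergence $f_j \to f_\infty$ already extracted in Theorem~\ref{limfun}, together with Lemmas~\ref{lem-vol-M-not-W}, \ref{lem-vol-W}, \ref{lem-area-W}, and \ref{lem-h}, supplies precisely the ingredients required.

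Given any $\eta > 0$, I would first choose $k$ so large that $\vol(M_j\setminus W_j) + \vol(M_\infty\setminus W_\infty) \le 20\pi D/k^{2} < \eta/2$ by Lemma~\ref{lem-vol-M-not-W}. With this $k$ fixed, the proof of Lemma~\ref{lem-h} provides a $j_0(k)$ such that the two-sided comparison $(1+1/k)^{-2}g_\infty \le g_j \le (1+1/k)^{2}g_\infty$ holds on $W$ for all $j \ge j_0(k)$, so hypothesis~(\ref{thm-subdiffeo-1}) of Theorem~\ref{thm-subdiffeo} is satisfied with $\varepsilon = 1/k$. Consequently the angular parameter $a$ in (\ref{thm-subdiffeo-3}) may be taken proportional to $D_0\arccos((1+1/k)^{-1})/\pi$, which tends to $0$ as $k\to\infty$; meanwhile Lemma~\ref{lem-h} bounds the distance distortion by $\lambda \le (D + \diam(M_\infty) + 8\pi)/(k-1)$, so both $h \le \sqrt{\lambda(D_0 + \lambda/4)}$ and $D_0\sqrt{\varepsilon^{2}+2\varepsilon}$ tend to $0$, and hence so does $\bar h$.

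Combining these shrinking factors with the uniform bounds $\vol(W_j), \vol(W_\infty) \le 4\pi D^{3}$ of Lemma~\ref{lem-vol-W} and $\area(\partial W_j), \area(\partial W_\infty) \le 8\pi D^{2}$ of Lemma~\ref{lem-area-W}, the first term in the Lakzian--Sormani estimate is bounded by $(2\bar h + a)(8\pi D^{3} + 16\pi D^{2})$, which becomes less than $\eta/2$ once $k$ and then $j$ are taken sufficiently large. Together with the tail estimate from the previous paragraph, Theorem~\ref{thm-subdiffeo} then yields $d_\mathcal{F}(M_j, M_\infty) < \eta$ for all $j$ sufficiently large along the already-extracted subsequence.

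The only delicate point is the order of quantifiers: the prefactor $(2\bar h + a)$ becomes small only after $j$ is taken large relative to $k$, whereas the tail volumes require $k$ large first. This is handled by a diagonal selection: for each $k$ pick $j(k) \ge j_0(k)$ with $j(k)$ strictly increasing so that the above estimates give $d_\mathcal{F}(M_{j(k)}, M_\infty) \le C/\sqrt{k}$ for a constant $C$ independent of $k$, whence $M_{j(k)} \Fto M_\infty$ as $k\to\infty$. Beyond this bookkeeping step, nothing more than the already-established lemmas of Section~5 is needed.
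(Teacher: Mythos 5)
Your proposal is correct and follows essentially the same route as the paper: apply the Lakzian--Sormani theorem to the regions $W_j^k$, use Lemmas \ref{lem-vol-M-not-W}--\ref{lem-h} to control each term on the right-hand side at rate $O(1/\sqrt{k})$, and resolve the order of quantifiers by a diagonal selection (the paper phrases this as nested subsequences followed by the diagonal, but the content is identical). No gaps.
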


\begin{proof}
By Lemma $\ref{lem-vol-W}$ and Lemma $\ref{lem-area-W}$, for all $j$,
$$
\vol(W_{j})\leq 4\pi D^3,  \qquad \area(\partial W_{j})\leq 8\pi D^{2},
$$
and
$$
\vol(W_{\infty})\leq 4\pi D^{3}, \qquad \area(\partial W_{\infty})\leq 8\pi D^{2}.
$$
Set $D_{0}=\max\{D, \diam(M_{\infty})\}$.
Choose a sufficiently large integer $k$ such that $\ds{\arccos\left(\frac{1}{1+\frac{1}{k}}\right)<\frac{2}{\sqrt{k}}}$, which implies $\ds{\arccos\left(\frac{1}{1+\frac{1}{k+i}}\right)<\frac{2}{\sqrt{k+i}}}$, for all positive integers $i$. Now take
$$
a=\frac{2 D}{\pi\sqrt{k+1}}.
$$
There exists a subsequence $\{M^{(1)}_{1}, M^{(1)}_{2}, M^{(1)}_{3}, \cdots\}$ of $\{M_{j}\}$ such that
$$
\vol(M^{(1)}_{j}\setminus (W^{(1)}_{j})^{k+1})\leq\frac{4\pi D}{(k+1)^{2}}, \qquad \vol(M_{\infty}\setminus W^{k+1}_{\infty})\leq \frac{4\pi D}{(k+1)^{2}},
$$
and
$$
\lambda\leq\frac{\diam(M_{\infty})}{k+1}.
$$
Then
$$
h\leq\sqrt{2\lambda D_{0}}\leq\frac{2D_{0}}{\sqrt{k+1}},
$$
and
$$
\bar{h}=\max\Bigg\{h, D_{0}\sqrt{\frac{1}{(k+1)^{2}}+\frac{2}{k+1}}\Bigg\}\leq\frac{2D_{0}}{\sqrt{k+1}}.
$$
By Theorem $\ref{thm-subdiffeo}$,
\begin{eqnarray*}
d_{\mathcal{F}}(M^{(1)}_j, M_\infty)
&\le&
\left(2\bar{h} + a\right) \Big[
\vol((W^{1}_{j})^{k+1})+\vol(W^{k+1}_\infty)+\area(\partial (W^{(1)}_{j})^{k+1})\\
& &+\area(\partial W^{k+1}_{\infty})\Big]
+\vol(M^{(1)}_j\setminus (W^{(1)}_j)^{k+1})+\vol(M_\infty\setminus W^{k+1}_\infty)\\
&\le& \left(\frac{4D_{0}}{\sqrt{k+1}}+\frac{2D}{\pi\sqrt{k+1}}\right)(8\pi D^3+16\pi D^{2})+\frac{8\pi D}{(k+1)^{2}}\\
&\le& \left(\frac{1}{\sqrt{k+1}}\right)\left[\left(4D_{0}+\frac{2D}{\sqrt{\pi}}\right)(8\pi D^3+16\pi D^{2})+8\pi D\right],
\end {eqnarray*}
for all $j$.

Similarly, we can take a subsequence
$$\{M^{(2)}_{1}, M^{(2)}_{2}, M^{(2)}_{3}, \cdots\}\subset\{M^{(1)}_{1}, M^{(1)}_{2}, M^{(1)}_{3}, \cdots\}$$
such that
$$
d_{\mathcal{F}}(M^{(2)}_j, M_\infty)\le \left(\frac{1}{\sqrt{k+2}}\right)\left[\left(4D_{0}+\frac{2D}{\sqrt{\pi}}\right)(8\pi D^3+16\pi D^{2})+8\pi D\right],
$$
for all $j$.

Repeating this process, we have for each positive integer $i$, there are subsequence $\{M^{(i+1)}_{1}, M^{(i+1)}_{2}, M^{(i+1)}_{3}, \cdots\}\subset\{M^{(i)}_{1}, M^{(i)}_{2}, M^{(i)}_{3}, \cdots\}$ such that
$$
d_{\mathcal{F}}(M^{(i+1)}_j, M_\infty)\le \left(\frac{1}{\sqrt{k+i+1}}\right)\left[\left(4D_{0}+\frac{2D}{\sqrt{\pi}}\right)(8\pi D^3+16\pi D^{2})+8\pi D\right],
$$
for all $j$.

Finally, we take the subsequence $\{M^{(1)}_{1}, M^{(2)}_{2}, M^{(3)}_{3}, \cdots\}$. For any $\varepsilon>0$, there exists a positive integer $i_{0}$ such that for all $i>i_{0}$,
$$
\left(\frac{1}{\sqrt{k+i}}\right)\left[\left(4D_{0}+\frac{2D}{\sqrt{\pi}}\right)(8\pi D^3+16\pi D^{2})+8\pi D\right]<\varepsilon.
$$
Thus, $d_{\mathcal{F}}(M^{(i)}_i, M_\infty)<\varepsilon$ for all $i>i_{0}$. This completes the proof.
\end{proof}

\end{document}